%%%%%%%%%%%%%%%%%%
\documentclass[a4paper,11pt]{amsart}

\usepackage{amsmath}
\usepackage{amssymb}
\usepackage{amsthm}
\usepackage[pdftex]{graphicx}
\usepackage{amscd}
\usepackage{comment}
\usepackage{setspace}
\usepackage{url}                 

\begin{document}

\numberwithin{equation}{section}

\theoremstyle{definition}
\newtheorem{theorem}{Theorem}[section]
\newtheorem{proposition}[theorem]{Proposition}
\newtheorem{lemma}[theorem]{Lemma}
\newtheorem{corollary}[theorem]{Corollary}
\newtheorem{remark}[theorem]{Remark}
\newtheorem{definition}[theorem]{Definition}
\newtheorem{example}[theorem]{Example}
\newtheorem{problem}[theorem]{Problem}
\newtheorem{convention}[theorem]{Convention}
\newtheorem{conjecture}[theorem]{Conjecture}

\title[Acylindrical hyperbolicity of Artin groups]
{Acylindrical hyperbolicity of Artin groups associated with graphs that are not cones}

\author[M. Kato]{Motoko Kato}
\thanks{The first author is supported by JSPS KAKENHI Grant Numbers 19K23406, 20K14311, 
and JST, ACT-X Grant Number JPMJAX200A, Japan.}
\address[M. Kato]{Faculty of Education, University of the Ryukyus, 1 Sembaru, 
Nishihara-Cho, Nakagami-Gun, Okinawa 903-0213, Japan}
\email{katom@edu.u-ryukyu.ac.jp}

\author[S. Oguni]{Shin-ichi Oguni}
\thanks{The second author is supported by JSPS KAKENHI Grant Number 20K03590.}
\address[S. Oguni]{Department of Mathematics, Faculty of Science, Ehime University, 2-5 Bunkyo-cho, Matsuyama, Ehime 790-8577, Japan}
\email{oguni.shinichi.mb@ehime-u.ac.jp}

\keywords{Artin group, Acylindrical hyperbolicity, WPD contracting element, CAT(0)-cube complex}
%\subjclass{Primary 20F65; Secondary 20F36, 20F67.}
%\date{\today}         

\maketitle

%---------------------------------------------------------------------
\begin{abstract}
Charney and Morris-Wright showed acylindrical hyperbolicity of 
Artin groups of infinite type associated with graphs that are not joins, 
by studying clique-cube complexes and the actions on them. 
In this paper, by developing their study and formulating some additional discussion, 
we demonstrate that acylindrical hyperbolicity holds for more general Artin groups. 
Indeed, we are able to treat Artin groups of infinite type associated with graphs that are not cones. 
\end{abstract}
%---------------------------------------------------------------------

%%%%%%%%%%%%%%%%%%
\section{Introduction}

Artin groups, also called Artin-Tits groups, have been widely studied 
since their introduction by Tits \cite{Tits}. 
In particular, Artin groups are important examples in geometric group theory. 
For various nonpositively curved or negatively curved properties on discrete groups, 
Artin groups are interesting targets. 
In this paper, we consider acylindrical hyperbolicity of Artin groups. 

Let $\Gamma$ be a finite simple graph 
with the vertex set $V=V(\Gamma)$ and the edge set $E=E(\Gamma)$. 
Each edge $e$ has two end vertices, which we denote as $s_e$ and $t_e$. 
We suppose that any edge $e$ is labeled by an integer $\mu(e)\ge 2$. 
The \textit{Artin group} $A_{\Gamma}$ associated with $\Gamma$ is defined by the following presentation 
	\begin{align}\label{pres_standard_eq}
	A_{\Gamma}=\langle V(\Gamma)\mid \underbrace{s_e t_e s_e t_e \cdots}_{\text{length $\mu(e)$}}	
	=\underbrace{t_e s_e t_e s_e\cdots}_{\text{length $\mu(e)$}} \quad \text{for all $e\in E(\Gamma)$ } 	\rangle.
	\end{align}
Free abelian groups, free groups, and braid groups are typical examples of Artin groups.
Adding the relation $v^2 = 1$ for all $v\in V(\Gamma)$ to (\ref{pres_standard_eq}) 
produces the associated \textit{Coxeter group} $W_\Gamma$. 
In terms of the properties of $W_{\Gamma}$, 
we can define several important classes of Artin groups. 
The Artin group $A_\Gamma$ is said to be \textit{of finite type} 
if $W_\Gamma$ is finite. Otherwise, it is said to be \textit{of infinite type}. 
The Artin group $A_\Gamma$ is said to be \textit{irreducible} 
if $W_\Gamma$ is \textit{irreducible}, that is, 
the defining graph $\Gamma$ cannot be decomposed as a join of two subgraphs 
such that all edges between them are labeled by $2$. 
It is well-known that an infinite Coxeter group $W_\Gamma$ is irreducible if and only if 
$W_\Gamma$ cannot be directly decomposed into two nontrivial subgroups (\cite{MR2240393} and \cite{MR2333366}). 
However, it is unclear 
whether $A_\Gamma$ is irreducible if and only if $A_\Gamma$ cannot be directly decomposed 
into two nontrivial subgroups. 
In general, Coxeter groups are well understood, 
but many basic questions for Artin groups remain open 
(refer to \cite{Charney2008PROBLEMSRT} and \cite{MR3203644}). 

We consider nonpositively curved or negatively curved properties on Artin groups. 
The following is one of the most important open problems 
\cite[Problem 4]{Charney2008PROBLEMSRT}. 
\begin{problem}\label{CAT(0)_conj}
Which Artin groups are \textit{CAT(0) groups}, that is, groups acting geometrically on CAT(0) spaces?
\end{problem}
\noindent 
Here, \textit{CAT(0) spaces} are geodesic spaces in which every geodesic triangle is not fatter than 
the comparison triangle in the Euclidean plane (see \cite{BH} for the precise definition).
A group action is said to be \textit{geometric} if the action is proper, cocompact, and isometric. 
In recent studies on geometric group theory, various properties besides the CAT(0) property 
have been actively investigated, such as systolic property and the Helly property 
(see, for example, \cite{HO20}, \cite{HO21}). 

In this paper, we consider the following problem \cite[Conjecture B]{haettel2019xxl}. 
\begin{problem}\label{acyl_conj}
Are irreducible Artin groups of infinite type acylindrically hyperbolic?
\end{problem}
\noindent 
The definition of acylindrical hyperbolicity is given in Section \ref{recall}.
There are many applications of acylindrical hyperbolicity 
(see, for example, \cite{MR3589159}, \cite{Osin}, and \cite{Osin2}). 
\begin{remark} 
\begin{enumerate}
\item Reducible Artin groups can be directly decomposed into two infinite subgroups. 
However, acylindrical hyperbolic groups cannot be directly decomposed into two infinite subgroups 
\cite[Corollary 7.3]{Osin}. 
Hence, such Artin groups are not acylindrically hyperbolic. 
\item Irreducible Artin groups of finite type have infinite cyclic centers 
\cite{MR323910}, \cite{MR422673}. 
Because acylindrical hyperbolic groups do not permit infinite centers \cite[Corollary 7.3]{Osin}, 
such Artin groups are not acylindrically hyperbolic. 
We remark that the central quotients for irreducible Artin groups of finite type are 
acylindrically hyperbolic 
(see \cite{MR1914565}, \cite{MR2390326}, \cite{MR2367021} for braid groups, 
and \cite{MR3719080} for the general case). 
\end{enumerate}
\end{remark}

Many affirmative partial answers for Problem \ref{acyl_conj} are known. 
Indeed, the following irreducible Artin groups of infinite type are known to be 
acylindrically hyperbolic: 
\begin{itemize}
\item Right-angled Artin groups \cite{MR2827012}, \cite{MR3192368}; 
\item Two-dimensional Artin groups such that the associated Coxeter groups are hyperbolic \cite{alex2019acylindrical}; 
\item Artin groups of XXL-type \cite{haettel2019xxl}; 
\item Artin groups of type FC such that the defining graphs have diameter greater than $2$ \cite{MR3966610}; 
\item Artin groups that are known to be  CAT(0) groups according to the result of Brady and McCammond \cite{MR1770639}, \cite{KO}; 
\item Euclidean Artin groups \cite{Calvez}. 
\end{itemize}
Actually, except for Euclidean Artin groups, 
all of these Artin groups are regarded as special cases of 
the following irreducible Artin groups of infinite type, 
which are known to be acylindrically hyperbolic: 
\begin{itemize}
\item Artin groups associated with graphs that are not joins \cite{Charney}; 
\item \textit{Two-dimensional} Artin groups, that is, 
Artin groups such that every triangle with three vertices $v_1,v_2,v_3$ 
of the defining graphs satisfies $\frac{1}{\mu((v_1,v_2))} + \frac{1}{\mu((v_2,v_3))}  + \frac{1}{\mu((v_3,v_1))} \le 1$ 
\cite{Vaskou}. 
\end{itemize}

Charney and Morris-Wright \cite{Charney} showed acylindrical hyperbolicity of 
Artin groups of infinite type associated with graphs that are not joins, 
by studying clique-cube complexes, 
which are CAT(0) cube complexes, 
and the isometric actions on them. 
In fact, they constructed a WPD contracting element of such an Artin group 
with respect to the isometric action on the clique-cube complex. 
In this paper, we generalize this result 
by developing their study and formulating some additional discussion. 
Our main theorem can be stated as the follows. 
\begin{theorem}\label{main}
Let $A_\Gamma$ be an Artin group associated with 
$\Gamma$, where $\Gamma$ has at least three vertices. 
Suppose that $\Gamma$ is not a cone. 
Then, the following are equivalent:
\begin{enumerate}
\item $A_\Gamma$ is irreducible, that is, 
$\Gamma$ cannot be decomposed as a join of two subgraphs such that all edges between them are labeled by $2$; 
\item $A_\Gamma$ has a WPD contracting element with respect to the isometric action on the clique-cube complex;
\item $A_\Gamma$ is acylindrically hyperbolic;
\item $A_\Gamma$ is directly indecomposable, that is, it cannot be decomposed as a direct product 
of two nontrivial subgroups. 
\end{enumerate}
\end{theorem}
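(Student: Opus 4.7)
The plan is to establish the cycle of implications $(1) \Rightarrow (2) \Rightarrow (3) \Rightarrow (4) \Rightarrow (1)$. The genuinely new content lies in $(1) \Rightarrow (2)$; the remaining three implications follow from existing general principles together with small technical adjustments.

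For $(4) \Rightarrow (1)$ I argue by contrapositive: if $\Gamma = \Gamma_1 \ast \Gamma_2$ is a join in which every crossing edge is labelled by $2$, the defining relations force $A_\Gamma \cong A_{\Gamma_1} \times A_{\Gamma_2}$, a nontrivial direct product decomposition. For $(3) \Rightarrow (4)$ I invoke the result cited in the Remark that an acylindrically hyperbolic group admits no direct decomposition into two infinite subgroups; the additional point to verify is that every nontrivial direct factor of the Artin groups under consideration is infinite, which I would address using the absence of a suitable finite normal subgroup in this setting (for example, any finite factor would centralise a loxodromic element and therefore be constrained by its virtually cyclic centraliser). For $(2) \Rightarrow (3)$ I use the standard fact that an isometric action on a geodesic metric space admitting a WPD contracting element makes the group either virtually cyclic or acylindrically hyperbolic; virtual cyclicity is ruled out because $\Gamma$ has at least three vertices and is not a cone, so $A_\Gamma$ contains a non-abelian subgroup.

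The main implication is $(1) \Rightarrow (2)$, which I split into two cases according to whether $\Gamma$ itself is a join. If $\Gamma$ is not a join, Charney and Morris-Wright already produce the required WPD contracting element for the action on the clique-cube complex, so nothing further is needed. The remaining, new case is $\Gamma = \Gamma_1 \ast \Gamma_2$ with $\Gamma$ still irreducible and not a cone, so that each $\Gamma_i$ has at least two vertices and some crossing edge $e$ satisfies $\mu(e) \geq 3$. Here I would select such an edge together with auxiliary vertices $v_1 \in \Gamma_1 \setminus \{s_e\}$ and $v_2 \in \Gamma_2 \setminus \{t_e\}$, form a candidate element $g \in A_\Gamma$ as a suitable alternating product of powers of $s_e, t_e, v_1, v_2$, and analyse the geometry of its axis in the clique-cube complex. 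The contracting property should reduce to a hyperplane projection estimate, while WPD would follow from showing that the set of elements coarsely stabilising a long subsegment of this axis is finite.

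The hard part will be verifying the WPD condition in the join case. Because $\Gamma$ is a join, every maximal clique of $\Gamma$ meets both $\Gamma_1$ and $\Gamma_2$, so the parabolic subgroups stabilising cubes of the clique-cube complex sit in a product-like position and are strictly larger than in the non-join setting handled by Charney and Morris-Wright. The presence of a crossing edge $e$ with $\mu(e) \geq 3$ must be exploited to break this product symmetry, presumably through a careful link analysis combined with a ping-pong-type argument that uses the non-commuting pair $s_e, t_e$ to trap any would-be quasi-stabiliser inside a controllable parabolic subgroup, thereby reducing WPD to a statement about finite intersections of standard parabolics.
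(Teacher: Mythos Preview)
Your overall cycle matches the paper's, and $(4)\Rightarrow(1)$ and $(2)\Rightarrow(3)$ are fine. In $(3)\Rightarrow(4)$ you still owe an argument that the finite direct factor is actually trivial; the paper supplies this separately via Proposition~\ref{finite normal subgroup}: when $\Gamma$ is not a cone the action of $A_\Gamma$ on $C_\Gamma$ is minimal (Lemma~\ref{CM Lemma 3.2}), so any finite normal subgroup fixes $C_\Gamma$ pointwise and in particular fixes the free vertex $A_\emptyset$, hence is trivial. Your centraliser sketch does not yield this.

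The substantive gap is in $(1)\Rightarrow(2)$ when $\Gamma$ is a join. An element built from only four generators $s_e,t_e,v_1,v_2$ will not be contracting in general. In $C_\Gamma$ an element is contracting precisely when it skewers a pair of strongly separated hyperplanes, and the mechanism the paper uses to obtain strong separation is that between the two chosen hyperplanes $J,J'$ there lies a hyperplane of \emph{every} vertex type: any hyperplane $H$ crossing $J$, being of some type $v$, is then blocked by the intermediate $v$-type hyperplane (two hyperplanes of the same type never cross, Remark~\ref{label}) and so cannot reach $J'$. A word in four letters produces no intermediate hyperplanes of the missing types, and nothing in your outline replaces this; indeed, whenever some $w\notin\{s_e,t_e,v_1,v_2\}$ is joined to all four by label-$2$ edges, $w$ centralises your $g$ and $g$ is not even WPD. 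The paper's construction is accordingly far more elaborate than a four-letter word: one passes to the full join decomposition $\Gamma=\Gamma_1\ast\cdots\ast\Gamma_k$ into indecomposable factors, forms an auxiliary graph $Q(\Gamma)$ on the factors (edges recording a crossing edge of label $\ge 3$) and takes a spanning tree $T$, chooses a synchronised family of closed walks of common length in each $(\Gamma_i)^c$ passing through \emph{every} vertex (Lemma~\ref{closed}), and builds $\gamma$ as a long product of the resulting generators with an alternating ``twist'' word $\tau_{i,j}$ inserted at each edge of $T$. Lemmas~\ref{twist}, \ref{key0}, \ref{key}, \ref{key4} then feed into Genevois's hyperplane criterion (Theorem~\ref{criterion}), which replaces your proposed direct WPD/ping-pong verification. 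None of these ingredients appear in your outline, and the phrase ``presumably through a careful link analysis'' is not a substitute for them.
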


\begin{remark}
When an Artin group $A_\Gamma$ is irreducible 
and the defining graph $\Gamma$ 
is not a cone, 
the center $Z(A_\Gamma)$ is known to be trivial. 
This fact is shown in \cite{Charney}. We present 
an alternative proof based on Theorem \ref{main} (see Remark \ref{mainremark}). 
\end{remark}

From Theorem \ref{main}, we find that 
many irreducible Artin groups of infinite type are acylindrically hyperbolic, e.g., 
the Artin groups associated with the defining graphs in Figure \ref{new_ex_fig}. 
\begin{figure}
\begin{center}
\includegraphics[width=12cm,pagebox=cropbox,clip]{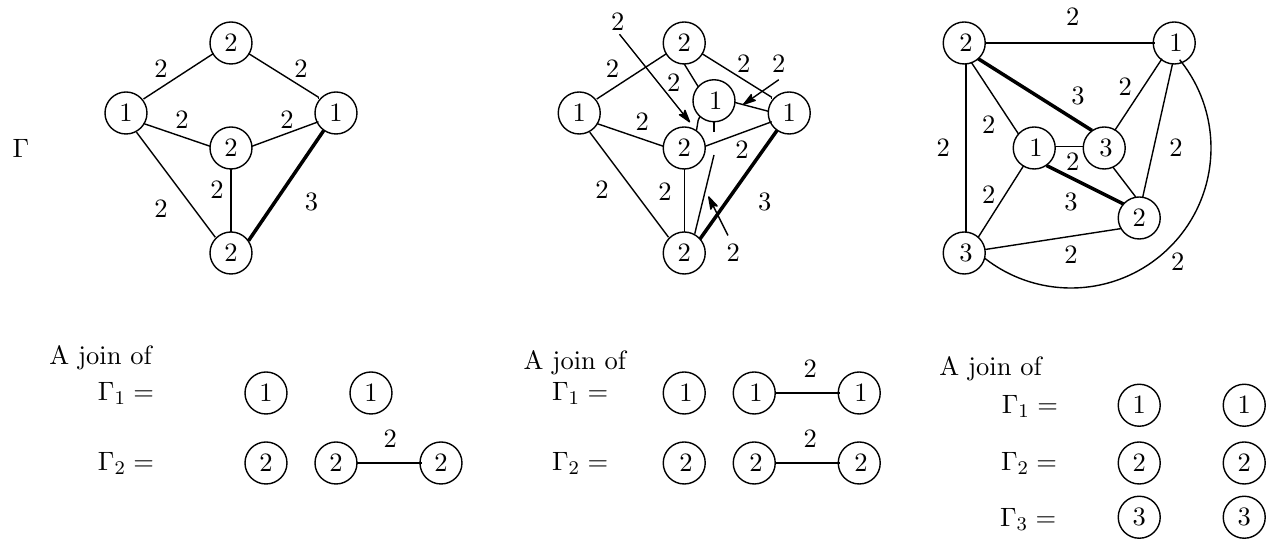} 
\caption{Defining graphs $\Gamma$ of new examples $A_{\Gamma}$.}\label{new_ex_fig}
\end{center}
\end{figure}

The remainder of this paper is organized as follows. 
Section \ref{recall} contains some preliminaries regarding acylindrically hyperbolic groups, WPD contracting elements, 
and CAT(0)-cube complexes. 
Section \ref{artin} presents preliminaries on defining graphs of Artin groups and joins of graphs. 
In Section \ref{clique}, we treat clique-cube complexes and the actions on them by Artin groups 
following \cite{Charney}. 
In Section \ref{local_geom}, we study the local geometry of clique-cube complexes. 
Section \ref{main_proof} gives a proof of Theorem \ref{main}. 
Our main task is to construct a candidate WPD contracting element 
and show that it really is a WPD contracting element. 

%%%%%%%%%%%%%%%%%%
\section{Acylindrical hyperbolicity, WPD contracting elements, and CAT(0)-cube complexes}\label{recall}
In this section, we collect some definitions and properties related to acylindrical hyperbolicity, 
WPD contracting elements, and CAT(0)-cube complexes that will be used later in the paper. 
See \cite{Genevois} and the references therein for details. 

First, we recall the definition of acylindrical hyperbolicity (see \cite{Osin}). 
\begin{definition}\label{acylindrically hyperbolic}
A group $G$ is \textit{acylindrically hyperbolic} if it admits an isometric action 
on a hyperbolic space $Y$ that is \textit{non-elementary}  (i.e., with an infinite limit set) 
and \textit{acylindrical} (i.e., for every $D\ge 0$, there exist some $R, N \ge 0$ such that, 
for all $y_1, y_2\in Y$, $d_Y(y_1, y_2)\ge R$ implies 
$\#\{g\in G |\ d_Y(y_1, g(y_1)), d_Y(y_2, g(y_2)) \le D\}\le N$).
\end{definition}

Next, we recall the definition of a WPD contracting element. 
\begin{definition}\label{WPD}
Let a group $G$ act isometrically on a metric space $X$. For $\gamma\in G$, 
we say that: 
\begin{itemize}
\item $\gamma$ is \textit{WPD} if, for every $D \ge 0$ and $x \in X$, there exists some $M\ge 1$ 
such that $\#\{g\in G |\ d_X(x, g(x)), d_X(\gamma^M(x), g\gamma^M(x)) \le D\}<\infty$; 
\item $\gamma$ is \textit{contracting} if $\gamma$ is \textit{loxodromic}, that is, 
there exists $x_0\in X$ such that $\mathbb Z \to X; n \mapsto \gamma^n(x_0)$ is a quasi-isometry onto the image 
$\gamma^\mathbb{Z}x_0:=\{\gamma^n(x_0) | n \in \mathbb Z\}$, 
and $\gamma^\mathbb{Z}x_0$ is \textit{contracting}, that is, there exists $B\ge 0$ such that 
the diameter of the nearest-point projection of any ball that is disjoint 
from $\gamma^\mathbb{Z}x_0$ onto $\gamma^\mathbb{Z}x_0$ is bounded by $B$. 
\end{itemize}
\end{definition}

The following is a consequence of \cite{BBF}. 
\begin{theorem}\label{BBF}
Let a group $G$ act isometrically on a geodesic metric space $X$. 
Suppose that $G$ is not virtually cyclic. 
If there exists a WPD contracting element $\gamma\in G$, then $G$ is acylindrically hyperbolic. 
\end{theorem}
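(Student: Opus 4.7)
The plan is to invoke the Bestvina--Bromberg--Fujiwara projection complex construction to transfer the WPD contracting element from the action on $X$ to a WPD loxodromic element for a $G$-action on a genuinely hyperbolic space (a quasi-tree), and then to apply Osin's characterization of acylindrical hyperbolicity.

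First I would set $Y := \gamma^{\mathbb{Z}} x_0$ and define the elementary closure $E(\gamma) := \{g \in G : d_{\mathrm{Haus}}(gY, Y) < \infty\}$, the coarse setwise stabilizer of the quasi-axis. The $G$-orbit $\mathcal{Y} := \{gY : gE(\gamma) \in G/E(\gamma)\}$ is then a collection of pairwise-distinct contracting quasi-lines permuted by $G$. Using nearest-point projections between distinct elements of $\mathcal{Y}$, the contracting hypothesis gives a uniform bound on the diameter of each $\pi_{Y_1}(Y_2)$, and standard contracting-element estimates produce the Behrstock-type inequalities needed for the BBF projection axioms.

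Applying the BBF construction to the family $\mathcal{Y}$ then yields a quasi-tree of metric spaces $\mathcal{P}(\mathcal{Y})$, which is in particular hyperbolic, together with an induced isometric $G$-action by permutation of the indexing quasi-lines. Each $Y' \in \mathcal{Y}$ sits in $\mathcal{P}(\mathcal{Y})$ as a quasi-isometrically embedded quasi-line, and in particular $\gamma$ acts loxodromically on $\mathcal{P}(\mathcal{Y})$ through its translation action along $Y$.

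The main obstacle is verifying that $\gamma$ remains WPD for this new action. Given $D \ge 0$ and a basepoint in $Y \subset \mathcal{P}(\mathcal{Y})$, an element $g \in G$ that displaces this basepoint and its $\gamma^M$-translate by at most $D$ in $\mathcal{P}(\mathcal{Y})$ must, by the BBF distance-estimate machinery, coarsely fix two widely separated points of $Y \subset X$; the WPD hypothesis on the original action then forces such $g$ to lie in a finite set once $M$ is taken sufficiently large, and the technical care lies in translating displacement in $\mathcal{P}(\mathcal{Y})$ back into geometric information along $Y \subset X$ while controlling the contribution of $E(\gamma)$. Finally, since $G$ acts on the hyperbolic space $\mathcal{P}(\mathcal{Y})$ with a WPD loxodromic element and is not virtually cyclic by hypothesis, Osin's criterion \cite{Osin} yields that $G$ is acylindrically hyperbolic, completing the proof.
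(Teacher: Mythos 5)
Your outline is correct and coincides with the argument the paper itself relies on: the paper states this theorem without proof, simply as a consequence of \cite{BBF}, and what you describe --- running the Bestvina--Bromberg--Fujiwara projection-complex construction on the $G$-orbit of the contracting quasi-axis, verifying the projection axioms from the contraction property, transferring loxodromicity and the WPD property to the resulting quasi-tree, and concluding via Osin's characterization in \cite{Osin} --- is exactly the standard content packaged in that citation. Since the paper supplies no independent proof, there is nothing further to compare; your sketch correctly identifies the one genuinely technical step (carrying WPD over to the action on the quasi-tree), which is precisely what \cite{BBF} provides.
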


CAT(0) cube complexes are considered as generalized trees in higher dimensions. 
The following is a precise definition (see \cite[p.111]{BH}). 

\begin{definition}\label{CAT(0)}
A \textit{cube complex} is a $CW$ complex constructed by gluing together cubes of 
arbitrary (finite) dimension by isometries along their faces. 
Furthermore, the cube complex is \textit{nonpositively curved} if the link of any of its vertices is 
a \textit{simplicial flag} complex (i.e., $n+1$ vertices span an $n$-simplex if and only if they are pairwise adjacent), 
and \textit{CAT(0)} if it is nonpositively curved and simply connected. 
\end{definition}

\begin{definition}
Let $X$ be a CAT(0) cube complex. 
We define an equivalence relation for the edges of $X$ as 
the transitive closure of the relation identifying two parallel edges of a square. 
For an equivalence class, a \textit{hyperplane} is defined as 
the union of the midcubes transverse to the edges belonging to the equivalence class. 
Then, for any edge belonging to the equivalence class, 
the hyperplane is said to be \textit{dual to} the edge.

For a hyperplane $J$, 
we denote 
the union of the cubes intersecting $J$ as $N(J)$, that is, 
the smallest subcomplex of $X$ containing $J$. 
We denote 
the union of the cubes not intersecting $J$ as $X\!\setminus\!\!\setminus J$, 
that is, the largest subcomplex of $X$ not intersecting $J$. 
\end{definition}

See \cite{Sageev} for the following. 
\begin{theorem}\label{separate}
Let $X$ be a CAT(0) cube complex and $J$ be a hyperplane. 
Then, $X\!\setminus\!\!\setminus J$ has exactly two connected components. 
\end{theorem}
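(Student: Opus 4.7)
The plan is to carry out the classical wall-separation argument for CAT(0) cube complexes. I will construct a function $\sigma \colon X^{(0)} \to \mathbb{Z}/2\mathbb{Z}$ that takes opposite values on the two endpoints of every edge dual to $J$, and then show that its two level sets are exactly the vertex sets of two connected components of $X\!\setminus\!\!\setminus J$.

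First I would fix an arbitrary orientation on each edge dual to $J$. For a combinatorial edge path $\alpha$ in the $1$-skeleton $X^{(1)}$, define $c(\alpha) \in \mathbb{Z}$ to be the signed count of edges of $\alpha$ that are dual to $J$, counted with $+1$ when crossed in the chosen orientation and $-1$ otherwise. By the very definition of the parallelism relation, any square of $X$ contains either zero edges dual to $J$, or exactly one pair of opposite edges dual to $J$; in the latter case the oriented boundary of the square has $c$-value zero, since the two crossings occur with opposite signs. Because $c$ is additive under concatenation and invariant under back-tracking, it descends to a homomorphism from $\pi_1(X^{(1)})$ to $\mathbb{Z}$. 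Since $X$ is simply connected, $\pi_1(X^{(1)})$ is normally generated by boundaries of squares, so $c$ vanishes on every loop. Fixing a basepoint $x_0$, the expression $\sigma(x) := c(\alpha) \bmod 2$ is then independent of the choice of edge path $\alpha$ from $x_0$ to $x$ and defines the desired function $\sigma$.

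For the "at least two components" direction, observe that any combinatorial path that stays inside $X\!\setminus\!\!\setminus J$ uses no edges dual to $J$, so $c$ vanishes on it, and hence any two vertices in the same component of $X\!\setminus\!\!\setminus J$ share the same $\sigma$-value. On the other hand, the two endpoints of any edge dual to $J$ have distinct $\sigma$-values, so there are at least two components.

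The remaining direction — "at most two components" — is the real content. Given $x,y \in X^{(0)}$ with $\sigma(x)=\sigma(y)$, I would take a combinatorial path $\alpha$ from $x$ to $y$ minimizing the number of $J$-dual edges. If this minimum is positive then by parity it is at least $2$, so one can locate two successive $J$-dual crossings of $\alpha$ whose signs are opposite. The key step is then to use the flag-link condition together with simple connectivity to perform a local surgery that replaces the sub-path between such a pair by a homotopic one carrying strictly fewer $J$-dual edges, contradicting minimality; iterating, one produces a path from $x$ to $y$ entirely in $X\!\setminus\!\!\setminus J$. The main obstacle lies precisely here: while the separation assertion is a formal consequence of simple connectivity and of parallelism inside squares, the connectedness of each side is the content of convexity of halfspaces in a CAT(0) cube complex, and relies essentially on the flag condition on links in order to assemble the squares needed to fill in the surgery.
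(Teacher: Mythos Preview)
The paper does not prove this statement at all; it merely records it with a citation to Sageev. So there is no in-paper argument to compare your proposal against, and your write-up is being measured against the standard proofs in the literature.

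On the ``at least two components'' direction: the idea is right, but there is a slip. With an \emph{arbitrary} orientation on each $J$-dual edge, there is no reason the two crossings on the boundary of a square occur with opposite signs; your integer-valued cochain $c$ need not vanish on square boundaries. What \emph{is} true is that each square boundary contains an even number (zero or two) of $J$-dual edges, so the unsigned crossing count is well defined modulo $2$. If you work mod $2$ from the start, $\sigma$ is well defined and the separation follows exactly as you say.

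The ``at most two components'' direction, however, is not a proof but a declaration of intent. You locate two consecutive $J$-crossings on a minimal path and then appeal to an unspecified ``local surgery'' using the flag condition to reduce the number of crossings. No mechanism is given for this reduction, and the phrase ``two successive crossings whose signs are opposite'' already presupposes a coherent transverse orientation of $J$, which you have not established. In the standard accounts the actual content sits here: one either (i) proves that the carrier $N(J)$ is combinatorially convex and splits as $J\times[0,1]$, so that a combinatorial geodesic between two vertices on the same side never enters the other side; or (ii) runs a disk-diagram argument showing that dual curves to $J$ form no bigons, hence a geodesic crosses $J$ at most once. Your paragraph names the obstacle correctly but does not clear it; as written, the second half is a plan rather than a proof.
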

\noindent
The two connected components of $X\!\setminus\!\!\setminus J$ 
are often denoted as $J^+$ and $J^-$, respectively. 

For convenience, we prepare the following for the proof of Theorem \ref{main}. 
\begin{definition}\label{separatingdef}
Let $X$ be a CAT(0) cube complex. 
For two vertices $x$ and $x'$ in $X$, we call a sequence of hyperplanes 
$P_1,\ldots,P_M$ a \textit{sequence of separating hyperplanes from $x$ to $x'$} 
if the sequence satisfies 
$$x \in P_1^-, P_1^+\supsetneq P_2^+\supsetneq\cdots 
\supsetneq P_{M-1}^+\supsetneq P_M^+\ni x'$$ 
for some connected components $P_i^+$ of $X\!\setminus\!\!\setminus P_i$ 
for all $i\in \{1,\ldots,M\}$. 

For two hyperplanes $J$ and $J'$ in $X$, we call a sequence of hyperplanes 
$P_1,\ldots,P_M$ a \textit{sequence of separating hyperplanes from $J$ to $J'$} 
if the sequence satisfies 
$$J^+ \supsetneq P_1^+\supsetneq P_2^+\supsetneq\cdots 
\supsetneq P_{M-1}^+\supsetneq P_M^+ \supsetneq J'^+$$ 
for some connected components $J^+$ of $X\!\setminus\!\!\setminus J$, 
$J'^+$ of $X\!\setminus\!\!\setminus J'$, 
and $P_i^+$ of $X\!\setminus\!\!\setminus P_i$ 
for all $i\in \{1,\ldots,M\}$. 
\end{definition}
\begin{remark}\label{separatingrem}
When $P_1,\ldots,P_M$ is a sequence of separating hyperplanes from $J$ to $J'$, 
for two vertices $x\in J^-\cup N(J)$ and $x'\in J'^+\cup N(J')$, 
$P_1,\ldots,P_M$ is a sequence of separating hyperplanes from $x$ to $x'$. 
\end{remark}

The following is part of \cite[Theorem 3.3]{Genevois}, 
and is used in the proof of Theorem \ref{main}. 
\begin{theorem}\label{criterion}
Let a group $G$ act isometrically on a CAT(0) cube complex $X$. 
Then, $\gamma \in G$ is a WPD contracting element if there exist 
two hyperplanes $J$ and $J'$ satisfying the following: 
\begin{enumerate}
\item[(i)] $J$ and $J'$ are strongly separated, that is, 
no hyperplane can intersect both $J$ and $J'$; 
\item[(ii)] $\gamma$ skewers $J$ and $J'$, that is, we have connected components $J^+$ of $X\!\setminus\!\!\setminus J$ and 
$J'^+$ of $X\!\setminus\!\!\setminus J'$ such that 
$\gamma^n (J^+) \subsetneq J'^+ \subsetneq J^+$ for some $n\in\mathbb N$;
\item[(iii)] $\mathrm{stab}(J) \cap \mathrm{stab} (J')$ is finite, where 
$\mathrm{stab}(J)=\{g\in G\  | \ g(J)=J\}$ and $\mathrm{stab} (J')=\{g\in G\  | \ g(J')=J'\}$. 
\end{enumerate}
\end{theorem}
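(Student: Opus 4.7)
The plan is to verify in turn the three requirements packaged into ``WPD contracting element'' --- loxodromicity, the contracting property of some orbit, and the WPD condition --- by using the skewering in (ii) to build a combinatorial quasi-axis, using strong separation in (i) to upgrade this quasi-axis to a contracting one, and finally using (iii) to reduce WPD to a finite counting problem. Throughout, I work in the combinatorial metric on the $1$-skeleton, recalling that the combinatorial distance between two vertices equals the number of hyperplanes separating them.

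\textbf{Step 1: Loxodromicity.} Iterating (ii) produces a strictly descending bi-infinite chain $\cdots\supsetneq\gamma^{-n}(J^+)\supsetneq J^+\supsetneq\gamma^n(J^+)\supsetneq\cdots$. For any vertex $x_0\in N(J)$, the hyperplanes $J,\gamma^n(J),\ldots,\gamma^{Kn}(J)$ are pairwise distinct and each separates $x_0$ from $\gamma^{Kn}(x_0)$, so $d_X(x_0,\gamma^{Kn}(x_0))\ge K$. Combined with the trivial upper bound $d_X(x_0,\gamma^{Kn}(x_0))\le K\cdot d_X(x_0,\gamma^n(x_0))$, this shows $k\mapsto\gamma^{kn}(x_0)$ is a quasi-isometric embedding of $\mathbb Z$, so $\gamma$ is loxodromic with orbit $L:=\gamma^{\mathbb Z}x_0$.

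\textbf{Step 2: Contracting.} To show $L$ is contracting I would apply (i) to each translated pair $\gamma^{kn}(J),\gamma^{kn}(J')$. The standard consequence of strong separation in a CAT(0) cube complex is that any combinatorial geodesic crossing both $\gamma^{kn}(J)$ and $\gamma^{kn}(J')$ must pass through a uniformly bounded ``bridge'' inside $N(\gamma^{kn}(J))\cap N(\gamma^{kn}(J'))$, and no single hyperplane is transverse to both. Given a ball $B$ disjoint from $L$, this traps the nearest-point projections of the vertices of $B$ to $L$ inside a single such bridge, up to bounded error: if two projections were separated along $L$ by more than one copy of the pair $(\gamma^{kn}(J),\gamma^{kn}(J'))$, the bridge property combined with strong separation would let me build a strictly shorter path from the projected point to $L$, contradicting the nearest-point choice. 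This yields the required uniform diameter bound on projections.

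\textbf{Step 3: WPD and main obstacle.} Fix $x\in X$ and $D\ge 0$, and let $M$ be a large multiple of $n$. If $g\in G$ satisfies $d_X(x,g(x))\le D$ and $d_X(\gamma^M(x),g\gamma^M(x))\le D$, then $g$ carries the hyperplane chain $\{\gamma^{kn}(J),\gamma^{kn}(J')\}$ crossing the segment from $x$ to $\gamma^M(x)$ onto the analogous chain crossing the segment from $g(x)$ to $g\gamma^M(x)$, which is displaced from the original by at most $D$. Using the contracting property from Step~2 together with strong separation, $g$ must respect this structure up to a shift by at most a constant $C(D)$: there exist integers $k$ deep in the range and $k'$ with $|k-k'|\le C(D)$ such that $g(\gamma^{kn}(J))=\gamma^{k'n}(J)$ and $g(\gamma^{kn}(J'))=\gamma^{k'n}(J')$. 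Rewriting gives $\gamma^{-k'n}g\gamma^{kn}\in\mathrm{stab}(J)\cap\mathrm{stab}(J')$, which is finite by (iii), so only finitely many $g$ can satisfy the displacement bounds. The main obstacle will be Step~2 together with its quantitative use in Step~3: extracting the contracting property with explicit control purely from combinatorial strong separation, and then showing that bounded displacement of the two endpoints really does force $g$ to respect the hyperplane chain up to a bounded shift. Once these geometric facts are nailed down, the reduction to finiteness through (iii) is immediate.
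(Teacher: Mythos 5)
First, a point of comparison: the paper does not prove this statement at all --- Theorem \ref{criterion} is imported verbatim as ``part of Theorem 3.3'' of Genevois \cite{Genevois} --- so the only thing to measure your proposal against is the cited source. Your three-step plan is the standard route and is essentially the strategy of that source: the skewering in (ii) produces a nested chain of pairwise strongly separated translates $\gamma^{kn}(J),\gamma^{kn}(J')$ giving a quasi-axis, strong separation gives contraction, and (iii) finishes WPD. Step 1 is complete and correct (modulo choosing $x_0$ on the correct side, e.g.\ $x_0\in J^-\cup N(J)$, so that the hyperplanes $\gamma^{jn}(J)$ for $1\le j\le K-1$ really do separate $x_0$ from $\gamma^{Kn}(x_0)$).

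Steps 2 and 3, however, are where the entire content of the theorem lives, and both are asserted rather than proved --- as you yourself flag. The sharpest gap is in Step 3: the claim that bounded displacement of $x$ and $\gamma^M(x)$ forces $g(\gamma^{kn}(J))=\gamma^{k'n}(J)$ and $g(\gamma^{kn}(J'))=\gamma^{k'n}(J')$ as an \emph{equality of hyperplanes}. What comes cheaply is only positional information: for $k$ away from the ends of the range, the pair $\bigl(g\gamma^{kn}(J),\,g\gamma^{kn}(J')\bigr)$ must separate $x$ from $\gamma^M(x)$ (at most $2D$ of the images can instead separate $x$ from $g(x)$ or $\gamma^M(x)$ from $g\gamma^M(x)$), which locates $g\gamma^{kn}(J)$ within a bounded window of the chain. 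But a hyperplane separating $x$ from $\gamma^M(x)$ in that window need not be one of the $\gamma^{k'n}(J)$ --- there may be many other hyperplanes interleaved between consecutive members of the chain, and ``close to the axis'' does not reduce to $\mathrm{stab}(J)\cap\mathrm{stab}(J')$. Promoting proximity to equality is exactly the combinatorial rigidity argument Genevois supplies (exploiting that no hyperplane crosses both members of a strongly separated pair to force two interleaving strongly separated pairs to share a member), and without it the appeal to (iii) does not go through. Likewise, in Step 2 the ``bridge'' bound on nearest-point projections for strongly separated hyperplanes is itself a theorem (Behrstock--Charney, Charney--Sultan, Genevois), not a remark, and one must also reconcile the combinatorial metric you use with the metric in which Definition \ref{WPD} is read. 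So the proposal is a correct road map whose two deferred facts are precisely the substance of the cited result; it is not yet a proof.
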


%%%%%%%%%%%
\section{Defining graphs of Artin groups and joins}\label{artin}
%%%%%%%%%%%%%%%%%%
\subsection{Defining graphs of Artin groups}\label{defining_graph}
We now present a precise description of 
the defining graph of an Artin group and introduce some related graphs. 

Let $V$ be a finite set. 
Denote the diagonal set as $\mathrm{diag}(V\times V):=\{(v,w)\in V\times V |\ v=w\}$. 
We consider the involution on the off-diagonal set 
$$\iota:V\times V\setminus\mathrm{diag}(V\times V)\ni (v,w)\mapsto (w,v)\in V\times V\setminus\mathrm{diag}(V\times V).$$ 
Any $e\in V\times V\setminus\mathrm{diag}(V\times V)$ is often presented as $(s_e,t_e)$. 
Then, for any $e\in V\times V\setminus\mathrm{diag}(V\times V)$, 
we have $s_{\iota(e)}=t_e$ and $t_{\iota(e)}=s_e$. 
We take a symmetric map 
$$\tilde{\mu}:V\times V\setminus\mathrm{diag}(V\times V) \to \mathbb{Z}_{\ge 2}\cup \{\infty\}.$$ 
Here, `\textit{symmetric}' means that $\tilde{\mu}\circ\iota=\tilde{\mu}$ is satisfied. 
Set $E_m:=\tilde{\mu}^{-1}(m)$ for any $m\in \mathbb{Z}_{\ge 2}\cup \{\infty\}$. 
Then, we have 
$$V\times V\setminus\mathrm{diag}(V\times V)=\bigsqcup_{m\in \mathbb{Z}_{\ge 2}\cup \{\infty\}}E_m.$$
We now have a finite simple labeled graph $\Gamma$ 
with the vertex set $V(\Gamma)=V$, 
the edge set $E(\Gamma)=\bigsqcup_{m\in \mathbb{Z}_\ge 2}E_m$, 
and the labeling $\mu:=\tilde{\mu}|_{E(\Gamma)}$. 
The Artin group $A_\Gamma$ associated with $\Gamma$ 
is then defined by the presentation (\ref{pres_standard_eq}) 
and $\Gamma$ is called the \textit{defining graph} of $A_\Gamma$. 

For convenience, we define 
two other finite simple graphs $\Gamma^c$ and $\Gamma^t$ as follows. 
$\Gamma^c$ is the finite simple graph 
with the vertex set $V(\Gamma^c)=V$ 
and the edge set $E(\Gamma^c)=E_{\infty}$. 
This is the so-called \textit{complement graph} of $\Gamma$. 
$\Gamma^t$ is the finite simple graph with the vertex set $V(\Gamma^t)=V$ 
and the edge set $E(\Gamma^t)=\bigsqcup_{m\in \mathbb{Z}_{\ge 3}\cup\{\infty\}}E_m$. 
See Figures \ref{new_ex_fig} and \ref{traditional_fig}. 

\begin{figure}
\begin{center}
\includegraphics[width=12cm,pagebox=cropbox,clip]{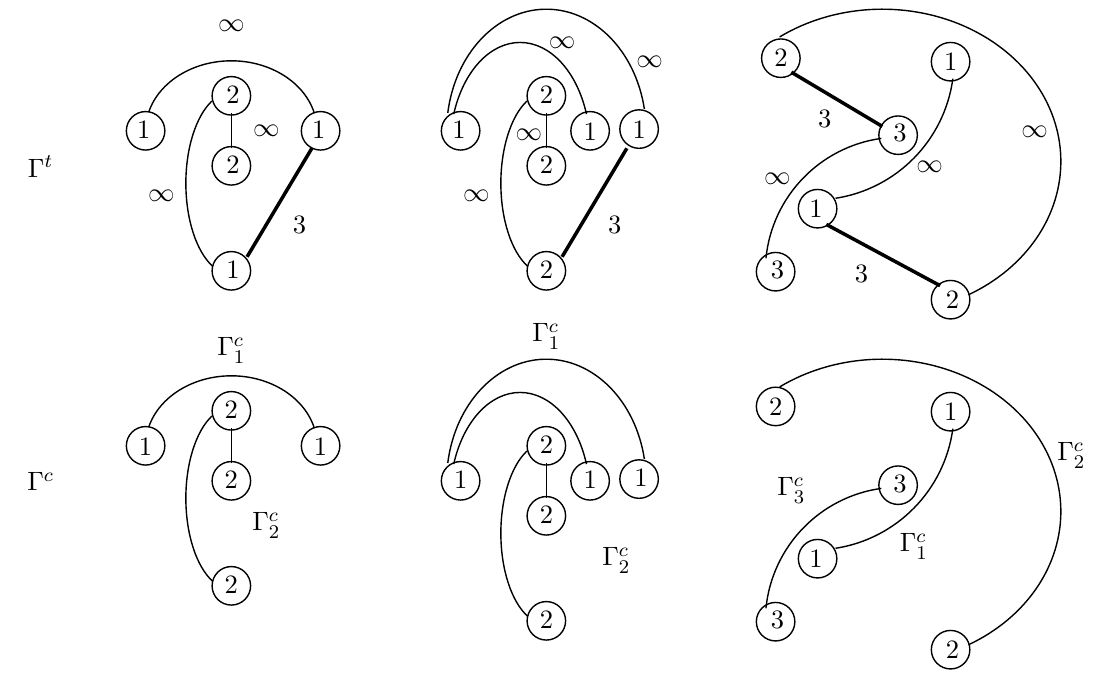} 
\caption{$\Gamma^t$ and $\Gamma^c$ with respect to $\Gamma$ in Figure \ref{new_ex_fig}.}\label{traditional_fig}
\end{center}
\end{figure}

\begin{remark}
In research related to Coxeter groups and some traditional treatments of Artin groups, 
$\Gamma^t$ is used with the label $\tilde{\mu}|_{E(\Gamma^t)}$. 
We mainly use $\Gamma$ 
in accordance with many recent studies on Artin groups. 
We only use $\Gamma^t$ as an aid in this paper. 
\end{remark}

%%%%%%%%%%%
\subsection{Joins}
\begin{definition}
Let $\Lambda\neq \emptyset$ be an index set. 
The \textit{join} $\ast_{\alpha\in \Lambda}\Gamma_\alpha$ 
of simple graphs $\Gamma_\alpha$ ($\alpha\in \Lambda$) is defined as a simple graph with 
the vertex set $$V(\ast_{\alpha\in \Lambda} \Gamma_\alpha):=\bigsqcup_{\alpha\in \Lambda}  V(\Gamma_\alpha)$$ 
and the edge set $$E(\ast_{\alpha\in \Lambda} \Gamma_\alpha):=\bigsqcup_{\alpha\in \Lambda} E(\Gamma_\alpha)\sqcup 
\bigsqcup_{\alpha,\beta\in \Lambda, \alpha\neq \beta}\{(v_\alpha, v_\beta)\mid v_\alpha\in V(\Gamma_\alpha), v_\beta\in V(\Gamma_\beta)\}.$$

A simple graph $\Gamma$ is said to be \textit{decomposable} (as a join) 
if there exist an index set $\Lambda$ with $\# \Lambda\ge 2$ and 
subgraphs $\Gamma_\alpha$ ($\alpha\in \Lambda$) of $\Gamma$ 
such that $\Gamma=\ast_{\alpha \in \Lambda}\Gamma_\alpha$. 
This is called a \textit{join decomposition} of $\Gamma$ into factors $\Gamma_\alpha$ ($\alpha\in \Lambda$). 
$\Gamma$ is said to be \textit{indecomposable} (as a join) if it is not decomposable. 

A simple decomposable graph $\Gamma$ is called a \textit{cone} 
if $\Gamma$ has a join decomposition into a subgraph 
consisting of only one vertex $v_0$ and a subgraph $\Gamma'$ 
$$\Gamma=\{v_0\}\ast \Gamma'.$$ 
\end{definition}

\begin{remark}
Any simple graph $\Gamma$ is indecomposable as a join if and only if its complement graph $\Gamma^c$ is connected.
\end{remark}

The following is a well-known fact. See Figures \ref{new_ex_fig} and \ref{traditional_fig}. 
\begin{lemma}\label{join-decomposition_lem}
Let $\Gamma$ be a simple graph. Suppose that $\Gamma$ is decomposable. 
Then, $\Gamma$ has a unique join decomposition into indecomposable factors 
$$\Gamma=\ast_{\alpha\in \Lambda}\Gamma_\alpha.$$
\end{lemma}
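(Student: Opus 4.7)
The plan is to reduce the statement to the well-known uniqueness of the decomposition of a graph into connected components, by passing to the complement graph. The preceding remark already records the key fact that a simple graph $\Gamma$ is indecomposable as a join if and only if $\Gamma^c$ is connected. I would begin by verifying the complementary identity
\[
\bigl(\ast_{\alpha\in \Lambda}\Gamma_\alpha\bigr)^c \;=\; \bigsqcup_{\alpha\in \Lambda}(\Gamma_\alpha)^c,
\]
which is immediate from the definition: two vertices in distinct factors of a join are always adjacent, so in the complement they are never adjacent; and within a single factor, adjacency in the join coincides with adjacency in that factor. Hence taking complements exchanges join decompositions of $\Gamma$ with decompositions of $\Gamma^c$ as disjoint unions of subgraphs.

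Next I would observe that, under this correspondence, a join decomposition of $\Gamma$ into indecomposable factors corresponds precisely to a decomposition of $\Gamma^c$ into connected subgraphs (by the remark quoted above, each $\Gamma_\alpha$ is indecomposable as a join iff $(\Gamma_\alpha)^c$ is connected). Since $\Gamma$ is assumed to be decomposable, $\Gamma^c$ is disconnected, so such a decomposition exists.

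Finally, I would invoke the elementary fact that any finite simple graph admits a unique decomposition into its connected components: the vertex sets of the components are the equivalence classes of the relation ``connected by a path'', which depends only on the graph. Transferring this uniqueness across the complement correspondence yields a unique join decomposition of $\Gamma$ into indecomposable factors, completing the proof.

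There is no real obstacle here; the only thing to be careful about is bookkeeping the bijection between join decompositions of $\Gamma$ and disjoint-union decompositions of $\Gamma^c$, and checking that ``indecomposable join factor'' on one side matches ``connected component'' on the other. Once this dictionary is set up, the result is immediate from the uniqueness of connected components.
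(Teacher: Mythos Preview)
Your proposal is correct and follows essentially the same approach as the paper: both arguments pass to the complement graph $\Gamma^c$, use the identity $(\ast_{\alpha}\Gamma_\alpha)^c = \bigsqcup_\alpha (\Gamma_\alpha)^c$, and obtain existence and uniqueness of the join decomposition from the decomposition of $\Gamma^c$ into its connected components. Your write-up is in fact more explicit about the uniqueness step than the paper's, which simply constructs the decomposition from the connected components and leaves uniqueness implicit.
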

\begin{proof}
Consider the decomposition of $\Gamma^c$ into connected components 
$$\Gamma^c=\bigsqcup_{\alpha\in \Lambda}(\Gamma^c)_\alpha.$$
Set $V_\alpha:=V((\Gamma^c)_\alpha)$ and 
define $\Gamma_\alpha$ as the subgraph of $\Gamma$ spanned by $V_\alpha$. 
Then, $(\Gamma_\alpha)^c=(\Gamma^c)_\alpha$. 
Additionally, we have a join decomposition $\Gamma=\ast_{\alpha \in \Lambda}\Gamma_\alpha$. 
\end{proof}

\begin{remark}
Any decomposable graph is not a cone 
if and only if each of its indecomposable factors has at least two vertices. 
\end{remark}

%%%%%%%%%%%
\section{Clique-cube complexes and actions on them}\label{clique}
In this section, we consider clique-cube complexes and the actions on them 
following \cite{Charney}. 
Let $A_\Gamma$ be an Artin group associated with a defining graph $\Gamma$ 
as in Subsection \ref{defining_graph}. 
By a theorem of van der Lek (see \cite{Paris}, \cite{van}), for any subset $U\subset V=V(\Gamma)$, 
the subgroup of $A_\Gamma$ generated by $U$ is itself an Artin group associated with 
the full subgraph of $\Gamma$ spanned by $U$. 
We denote this subgroup as $A_U$. 
When $U$ is empty, we define $A_\emptyset =\{1\}$. 
We say that $U$ \textit{spans a clique} in $\Gamma$ if any two elements of $U$ are joined by an edge in $\Gamma$. 

\begin{definition}[\cite{Charney} Definition 2.1]\label{clique cube}
Consider the set 
$$\Delta_\Gamma = \{U \subset V |\ U\text{ spans a clique in }\Gamma,\text{ or }U = \emptyset\}.$$ 
The \textit{clique-cube complex} $C_\Gamma$ is the cube complex 
whose vertices (i.e., $0$-dimensional cubes) are cosets $gA_U$ ($g\in A_\Gamma, U\in \Delta_\Gamma$), 
where two vertices $gA_U$ and $hA_{U'}$ 
are joined by an edge (i.e., a $1$-dimensional cube) in $C_\Gamma$ 
if and only if $gA_U\subset hA_{U'}$ and $U$ and $U'$ differ by a single generator. 
Note that, in this case, 
we can always replace $h$ by $g$, that is, $hA_{U'}=gA_{U'}$. 
More generally, two vertices $gA_U$ and $gA_{U'}$ with $gA_U\subset gA_{U'}$ 
span a $\#(U'\setminus U)$-dimensional cube $[gA_U , gA_{U'}]$ in $C_\Gamma$. 
\end{definition}

The group $A_\Gamma$ acts on the clique-cube complex $C_\Gamma$ by left multiplication, 
$h\cdot gA_U = (hg)A_U$. 
This action preserves the cubical structure and is isometric. 
The action is also co-compact with a fundamental domain $\bigcup_{U\in \Delta_\Gamma}[A_\emptyset, A_U]$, 
where $[A_\emptyset, A_U]$ is a $\# U$-dimensional cube spanned by two vertices $A_\emptyset$ and $A_U$ in $C_\Gamma$. 
However, the action is not proper. 
In fact, the stabilizer of a vertex $gA_U$ is the conjugate subgroup 
$gA_U g^{-1}$, so all vertices except translations of $A_\emptyset$ have infinite stabilizers. 
We also note that $C_\Gamma$ is not a proper metric space because it contains infinite valence vertices. 
Additionally, $C_\Gamma$ has infinite diameter if and only if $\Gamma$ itself is not a clique. 

\begin{remark}[\cite{Charney} Section 2]\label{label}
Each edge in $C_\Gamma$ can be labeled with a generator in $V$. 
For example, the edge between $gA_U$ and $gA_{U\sqcup \{v\}}$ is labeled by $v$. 
Any two parallel edges in a cube have the same label, 
so we can also label the hyperplane dual to such an edge by $v$ 
and say that such a hyperplane is \textit{of $v$-type}. 
Every hyperplane of $v$-type is the translation of a hyperplane 
dual to the edge between $A_\emptyset$ and $A_{\{v\}}$. 
If a hyperplane of $v$-type crosses another hyperplane of $v'$-type, then $(v,v')\in E(\Gamma)$. 
In particular, two different hyperplanes of the same type do not cross each other.
\end{remark}

\begin{theorem}[\cite{Charney} Theorem 2.2]
The clique-cube complex $C_\Gamma$ is CAT(0) for any graph $\Gamma$.
\end{theorem}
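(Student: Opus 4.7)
The plan is to verify the two conditions in Definition \ref{CAT(0)}: that $C_\Gamma$ is simply connected and that the link of every vertex is a simplicial flag complex.

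First I would analyze the link at a vertex; by the $A_\Gamma$-action it suffices to consider $\mathrm{lk}(A_U)$ for $U \in \Delta_\Gamma$. Its vertices correspond to edges incident to $A_U$, which split into upward directions $[A_U, A_{U\cup\{v\}}]$, one for each $v\in V\setminus U$ with $U\cup\{v\}\in\Delta_\Gamma$, and downward directions $[hA_{U\setminus\{v\}}, A_U]$, one for each $v\in U$ and each coset $hA_{U\setminus\{v\}}$ of $A_{U\setminus\{v\}}$ in $A_U$. Higher simplices in the link correspond to higher-dimensional cubes at $A_U$. I would then check the pairing rules at the 1-simplex level: two upward directions $v,v'$ span an edge iff $(v,v')\in E(\Gamma)$ (so $U\cup\{v,v'\}\in\Delta_\Gamma$); an upward $v$ and a downward $(h,v')$ with $v\neq v'$ always span the square $[hA_{U\setminus\{v'\}}, A_{U\cup\{v\}}]$; two downward directions $(h,v)$ and $(h',v')$ with $v\neq v'$ span an edge iff $hA_{U\setminus\{v\}}\cap h'A_{U\setminus\{v'\}}\neq\emptyset$; edges sharing a label never bound a square, since parallel edges in a cube do not share a vertex. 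Flagness then amounts to the claim that pairwise compatibility of any set of directions implies joint compatibility, which follows from the parabolic-intersection property $A_{T_1}\cap A_{T_2}=A_{T_1\cap T_2}$ (a consequence of van der Lek's theorem) together with its natural extension to cosets in $A_U$.

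For simple connectedness, I would use a standard van Kampen / developing-map argument. The quotient $C_\Gamma / A_\Gamma$ is covered by the fundamental domain $\bigcup_{U\in\Delta_\Gamma}[A_\emptyset, A_U]$ with vertex stabilizers $A_U$; presenting $\pi_1$ as a complex of groups over this quotient and tracking the $2$-cells, one sees that the relations imposed by squares with labels $v,v'$ joined by an edge $e\in E(\Gamma)$ are precisely the braid relations of length $\mu(e)$ appearing in (\ref{pres_standard_eq}). Hence the fundamental-group computation reproduces $A_\Gamma$ itself, forcing $\pi_1(C_\Gamma)=1$. Alternatively, one can build $C_\Gamma$ as the nested union of convex subcomplexes $\bigcup_{\ell(g)\le n}g\cdot(\bigcup_U [A_\emptyset, A_U])$ and verify inductively that each is contractible.

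The main obstacle is the flag condition on links, specifically the joint-compatibility property for $k\ge 3$ downward directions: pairwise nonemptiness of cosets $h_i A_{U\setminus\{v_i\}}$ must imply that the full intersection $\bigcap_i h_i A_{U\setminus\{v_i\}}$ is nonempty. This is a Helly-type property for parabolic subgroups of $A_U$, and is the step that genuinely uses structural information about Artin groups beyond the bare presentation. Simple connectedness, while requiring careful bookkeeping, follows a routine pattern for coset cube complexes once the cellular structure is in hand.
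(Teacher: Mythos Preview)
The present paper does not give a proof of this statement: it is quoted as \cite[Theorem~2.2]{Charney} and used as a black box. There is therefore no in-paper argument to compare your proposal against.

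That said, your outline is exactly the strategy one expects (and essentially what \cite{Charney} carries out): verify Gromov's link condition and simple connectedness. Your description of $\mathrm{lk}(A_U)$ is accurate, as is the observation that it splits as a join of the upward part (governed by cliques in $\Gamma$, hence automatically flag) and the downward part (governed by cosets of the maximal standard parabolics $A_{U\setminus\{v\}}$ in $A_U$). Your complex-of-groups sketch for $\pi_1(C_\Gamma)=1$ is the standard route and works.

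One genuine gap to be aware of: you write that the Helly property for the downward cosets ``follows from the parabolic-intersection property $A_{T_1}\cap A_{T_2}=A_{T_1\cap T_2}$ \ldots\ together with its natural extension to cosets.'' This is where the argument is not yet a proof. The subgroup intersection formula does give that any \emph{nonempty} intersection of two parabolic cosets is again a single parabolic coset, but it does \emph{not} by itself imply that pairwise nonemptiness forces a common point when three or more cosets are involved. That Helly step requires an additional argument specific to Artin groups (this is precisely what \cite{Charney} supplies), and you should not expect it to drop out of van der Lek's theorem formally. You have correctly located the crux; just be aware that ``natural extension to cosets'' is hiding the real work.
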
 

\begin{lemma}[\cite{Charney} Lemma 2.3]\label{linkGamma}
In the clique-cube complex $C_\Gamma$, the link of the vertex 
$A_\emptyset$ is isomorphic to 
the flag simplicial complex whose $1$-skeleton is $\Gamma$.
\end{lemma}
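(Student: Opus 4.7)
The plan is to identify the cubes of $C_\Gamma$ that contain the vertex $A_\emptyset$ and read off the link directly from them. Recall that the link of a vertex in a cube complex is the simplicial complex whose $k$-simplices are the corners of $(k+1)$-cubes at that vertex, glued according to face relations. So first I would classify the cubes containing $A_\emptyset$.

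Using Definition \ref{clique cube}, every cube in $C_\Gamma$ has the form $[gA_W, gA_{W'}]$ with $W \subset W'$ in $\Delta_\Gamma$, and its vertices are exactly the cosets $gA_{W''}$ for $W \subset W'' \subset W'$. In particular, if $A_\emptyset$ is a vertex of such a cube, then $A_\emptyset = gA_{W''}$ for some intermediate $W''$. This forces $g \in A_\emptyset = \{1\}$ and $W'' = \emptyset$, so $W = \emptyset$ and the cube is $[A_\emptyset, A_U]$ for some nonempty $U \in \Delta_\Gamma$. Conversely, every such $[A_\emptyset, A_U]$ is a genuine $\#U$-dimensional cube of $C_\Gamma$. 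Hence the cubes at $A_\emptyset$ are in bijection with the nonempty cliques $U$ of $\Gamma$, and the corresponding simplex in the link has dimension $\#U - 1$.

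Next I would read off the face relations. The $0$-simplices of the link correspond to the edges $[A_\emptyset, A_{\{v\}}]$, one per vertex $v \in V(\Gamma)$, so the vertex set of the link is canonically identified with $V(\Gamma)$. Two such vertices $v, w$ span a $1$-simplex iff the square $[A_\emptyset, A_{\{v,w\}}]$ exists in $C_\Gamma$, which by the definition of $\Delta_\Gamma$ happens iff $\{v, w\}$ spans a clique, i.e., iff $(v,w) \in E(\Gamma)$. Therefore the $1$-skeleton of the link is $\Gamma$. More generally, a set $\{v_1, \dots, v_k\} \subset V$ spans a $(k-1)$-simplex in the link iff $\{v_1, \dots, v_k\} \in \Delta_\Gamma$, i.e., iff it spans a clique in $\Gamma$. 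This is exactly the defining property of the flag complex on $\Gamma$, so the link is isomorphic to that flag complex via $[A_\emptyset, A_{\{v\}}] \mapsto v$.

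I do not anticipate a genuine obstacle here: the only mildly subtle point is the first step, namely checking that a cube of $C_\Gamma$ containing $A_\emptyset$ must be of the form $[A_\emptyset, A_U]$ (so the coset structure collapses and the link is truly indexed by cliques of $\Gamma$ rather than by some larger combinatorial data coming from nontrivial cosets). Once that is verified, the identification with the flag complex is immediate from the definition of $\Delta_\Gamma$.
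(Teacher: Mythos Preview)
Your argument is correct. Note that the paper does not give its own proof of this lemma; it simply cites it as \cite{Charney} Lemma~2.3, so there is nothing to compare against here, but your direct reading of the cubes at $A_\emptyset$ is exactly the standard verification.
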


\begin{lemma}[\cite{Charney} Lemma 2.4]
If the clique-cube complex 
$C_\Gamma$ is reducible, that is, 
decomposable as a product of two subcomplexes, then $\Gamma$ is decomposable (as a join). 
In particular, if $\Gamma$ is indecomposable, then $C_\Gamma$ is irreducible.
\end{lemma}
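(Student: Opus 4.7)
The plan is to exploit the link decomposition for products of CAT(0) cube complexes together with Lemma \ref{linkGamma}. Concretely, I will show that a nontrivial product decomposition $C_\Gamma = X_1 \times X_2$ induces, at the basepoint $A_\emptyset$, a nontrivial simplicial join decomposition of $\mathrm{lk}(A_\emptyset)$, and then pass to $1$-skeletons to obtain a nontrivial join decomposition of $\Gamma$.

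First I would set up: assume $C_\Gamma$ is reducible, so there is an isomorphism $\varphi: C_\Gamma \to X_1 \times X_2$ of cube complexes with both $X_i$ of positive dimension. Writing $\varphi(A_\emptyset) = (v_1, v_2)$, I would invoke the general fact (immediate from the definition of the product cube complex) that the edges at a vertex of $X_1 \times X_2$ split into those coming from $X_1$ and those coming from $X_2$, and that
\[
\mathrm{lk}_{X_1\times X_2}(v_1,v_2) \;\cong\; \mathrm{lk}_{X_1}(v_1) \ast \mathrm{lk}_{X_2}(v_2),
\]
where $\ast$ denotes the simplicial join. This identity extends to higher-dimensional simplices because a cube in the product is a product of cubes in the factors, so the flag complex structure on the link respects the join.

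Next I would check that both factor links are nonempty. Since $C_\Gamma$ is CAT(0), hence connected, each $X_i$ is connected; being of positive dimension and connected, $X_i$ contains an edge path from $v_i$ to any given edge, so $v_i$ itself has at least one adjacent edge. Hence $\mathrm{lk}_{X_i}(v_i) \neq \emptyset$ for $i=1,2$, so the join above is a genuine (nontrivial) join.

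Finally I would conclude by taking $1$-skeletons. By Lemma \ref{linkGamma}, the $1$-skeleton of $\mathrm{lk}(A_\emptyset)$ is $\Gamma$. Since the $1$-skeleton of a simplicial join is the graph-theoretic join of the $1$-skeletons, setting $\Gamma_i$ to be the $1$-skeleton of $\mathrm{lk}_{X_i}(v_i)$ yields $\Gamma = \Gamma_1 \ast \Gamma_2$, witnessing that $\Gamma$ is decomposable. The contrapositive is the second assertion. The only subtle point, and the step I would be most careful about, is the nonemptiness of both factor links; this is where I rely on connectedness of the $X_i$ together with their having positive dimension, both of which follow from $C_\Gamma$ being CAT(0) and the product decomposition being genuinely nontrivial.
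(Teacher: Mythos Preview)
Your proposal is correct and follows essentially the same approach as the paper. The paper does not prove this lemma in isolation (it is cited from \cite{Charney}), but the argument is reproduced verbatim in the first paragraph of the proof of Proposition~\ref{C_reducible_prop} $(1)\Rightarrow(2)$: take the product decomposition, use that the link of $(v',v'')$ in $C'\times C''$ is the simplicial join $\mathrm{Lk}_{C'}(v')\ast\mathrm{Lk}_{C''}(v'')$, apply this at $A_\emptyset$, and invoke Lemma~\ref{linkGamma} to identify the $1$-skeleton of the link with $\Gamma$. Your extra care in verifying that both factor links are nonempty is a welcome addition that the paper leaves implicit.
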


More strongly, we can show the following. 
This proposition is not directly used in the proof of Theorem \ref{main}, 
but is of independent interest. 
\begin{proposition}\label{C_reducible_prop}
The following are equivalent:
\begin{itemize}
\item[(1)] $C_{\Gamma}$ is reducible; 
\item[(2)] $A_\Gamma$ is reducible, that is, $\Gamma^t$ is connected. 
In other words, 
$\Gamma$ can be decomposed as a join of two subgraphs such that all edges between them are labeled by $2$; 
\item[(3)] In addition to (2), $C_{\Gamma}$ is a direct product of $C_{\Gamma'}$ and $C_{\Gamma''}$
when $\Gamma$ is decomposed as a join of two subgraphs $\Gamma'$ and $\Gamma''$ 
such that all edges between them are labeled by $2$. 
\end{itemize}
\end{proposition}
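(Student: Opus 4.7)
The plan is to prove the equivalences by the cycle $(3) \Rightarrow (1) \Rightarrow (2) \Rightarrow (3)$, with $(3) \Rightarrow (1)$ immediate. For $(2) \Rightarrow (3)$, I use that when $\Gamma = \Gamma' \ast \Gamma''$ with all cross-edges labeled $2$, the generators of $A_{\Gamma'}$ commute with those of $A_{\Gamma''}$, so $A_\Gamma = A_{\Gamma'} \times A_{\Gamma''}$ as groups and $A_U = A_{U'} \times A_{U''}$ for every clique $U = U' \sqcup U'' \in \Delta_\Gamma$. The rule $gA_U = g'g''A_{U' \sqcup U''} \mapsto (g'A_{U'}, g''A_{U''})$ (using the unique decomposition $g = g'g''$ with $g' \in A_{\Gamma'}$, $g'' \in A_{\Gamma''}$) is then a well-defined bijection on vertices sending each cube of $C_\Gamma$ to a product of cubes, giving the isomorphism $C_\Gamma \cong C_{\Gamma'} \times C_{\Gamma''}$.

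The main content is $(1) \Rightarrow (2)$. Assume $C_\Gamma = X_1 \times X_2$ is a nontrivial product. Since hyperplanes of the same type never cross (Remark \ref{label}), while hyperplanes from different factors of a product always do, each hyperplane type lies entirely in a single factor; this induces a partition $V = V_1 \sqcup V_2$. Every pair $(v, w) \in V_1 \times V_2$ then yields crossing hyperplanes, so $(v, w) \in E(\Gamma)$ by Remark \ref{label}, recovering the join decomposition $\Gamma = \Gamma_1 \ast \Gamma_2$ of Lemma \ref{linkGamma} (via Charney's argument) with all cross-pairs being edges.

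The main obstacle is then to show that each such cross-edge is labeled $2$. For $v \in V_1$, $w \in V_2$, I analyze the link of $A_{\{v, w\}}$ in $C_\Gamma$: in a product CAT(0) cube complex, the link of any vertex is the join of the two factor links, so every $X_1$-direction edge and every $X_2$-direction edge at $A_{\{v, w\}}$ must span a $2$-cube. The downward edges at $A_{\{v, w\}}$ labeled $v$ (resp.\ $w$) are parametrized by cosets $pA_{\{w\}} \in A_{\{v, w\}}/A_{\{w\}}$ (resp.\ $qA_{\{v\}} \in A_{\{v, w\}}/A_{\{v\}}$); any $2$-cube in $C_\Gamma$ with $A_{\{v, w\}}$ as its top corner has the form $[rA_\emptyset, A_{\{v, w\}}]$ for some $r \in A_{\{v, w\}}$, with downward edges going to $rA_{\{w\}}$ and $rA_{\{v\}}$. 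The join condition therefore forces $pA_{\{w\}} \cap qA_{\{v\}} \neq \emptyset$ for all $p, q$, equivalently $A_{\{v, w\}} = A_{\{w\}} A_{\{v\}}$ as a set. To conclude, I will use the standard fact that for a two-generator Artin group $A_{\{v, w\}}$ with $\mu(v, w) = m$, this equality forces $m = 2$: projecting onto the associated dihedral Coxeter quotient $W_{\{v, w\}} \cong I_2(m)$ of order $2m$, the image of $A_{\{w\}}A_{\{v\}}$ lies in the $4$-element subset $\{1, t\} \cdot \{1, s\} = \{1, s, t, ts\}$ (with $s, t$ the images of $v, w$), which is incompatible with $|W_{\{v, w\}}| = 2m \geq 6$ when $m \geq 3$.
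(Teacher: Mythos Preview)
Your proof is correct and follows the same architecture as the paper: the cycle $(3)\Rightarrow(1)\Rightarrow(2)\Rightarrow(3)$, the explicit product isomorphism for $(2)\Rightarrow(3)$, and for $(1)\Rightarrow(2)$ the join decomposition of $\Gamma$ followed by a link analysis at $A_{\{v,w\}}$ ending in a projection to the dihedral Coxeter quotient. The only real difference is in how the contradiction for a cross-edge of label $m>2$ is extracted. The paper exhibits a specific $3$-line full subgraph of the link of $A_{\{s,t\}}$ (fullness coming from Lemma~\ref{toy}, which rules out a square on the pair of edges to $tA_{\{s\}}$ and $sA_{\{t\}}$) and observes that such a full path cannot have its two middle vertices in different join factors. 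You instead use the join condition to force \emph{every} mixed pair of downward edges at $A_{\{v,w\}}$ to span a square, obtain the double-coset equality $A_{\{v,w\}}=A_{\{w\}}A_{\{v\}}$, and finish with the cardinality bound $2m=|W_{\{v,w\}}|\le |\{1,t\}\{1,s\}|=4$. Your argument is slightly more streamlined and bypasses Lemma~\ref{toy} entirely; the paper's version has the virtue of pinning the obstruction on a single explicit missing square.
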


\begin{proof}
$(3)\Rightarrow (1)$ is obvious. We show that $(1)\Rightarrow (2)\Rightarrow (3)$.

We first consider $(1)\Rightarrow (2)$. Suppose that $C_{\Gamma}$ is reducible.
First, we show that $\Gamma$ can be decomposed as a join of two subgraphs.
We fix two subcomplexes $C'$ and $C''$ of $C_{\Gamma}$ satisfying $C_{\Gamma}=C'\times C''$.
Then, for any vertex $v=(v', v'')$ of $C_{\Gamma}=C'\times C''$, 
$\mathrm{Lk}_{C_{\Gamma}}v$ is the join of $\mathrm{Lk}_{C'}v'$ and $\mathrm{Lk}_{C''}v''$.
Let $\Gamma_v$, $\Gamma'_v$, and $\Gamma''_v$ be $1$-skeletons of 
$\mathrm{Lk}_{C}v$, $\mathrm{Lk}_{C'}v'$, and $\mathrm{Lk}_{C''}v''$, respectively.
Then, $\Gamma_v$ is the join of $\Gamma'_v$ and $\Gamma''_v$. 
In particular, $\Gamma_{A_{\emptyset}}$ is 
the join of $\Gamma'_{A_{\emptyset}}$ and $\Gamma''_{A_{\emptyset}}$. 
We set $\Gamma':=\Gamma'_{A_{\emptyset}}$ and $\Gamma'':=\Gamma''_{A_{\emptyset}}$. 
Because $\Gamma$ is isomorphic to $\Gamma_{A_{\emptyset}}$ by Lemma \ref{linkGamma}, 
$\Gamma$ can be regarded as the join of $\Gamma'$ and $\Gamma''$. 
Thus far, the argument is based on th proof of Lemma \ref{linkGamma} in \cite{Charney}. 

Next, we show that all edges between $\Gamma'$ and $\Gamma''$ are labeled by $2$.
Assume that we have $s\in V(\Gamma')$ and $t\in V(\Gamma'')$ 
such that $e=(s,t)$ is an edge of $\Gamma$ with label $m>2$. 
Let us consider three squares 
$[A_{\emptyset}, A_{\{s,t\}}]$, $[sA_{\emptyset}, A_{\{s,t\}}]$, and $[tA_{\emptyset}, A_{\{s,t\}}]$ 
around $A_{\{s,t\}}$. Then, we have a $3$-line subgraph of $\Gamma_{A_{\{s,t\}}}$ 
corresponding to these three squares (see Figure \ref{3-line_fig}). 
\begin{figure}
\begin{center}
\includegraphics[width=8cm,pagebox=cropbox,clip]{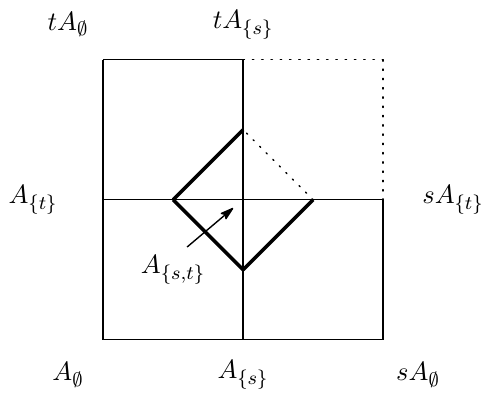}
\caption{A $3$-line full subgraph of $\Gamma_{A_{\{s,t\}}}$.}\label{3-line_fig}
\end{center}
\end{figure}
The $3$-line subgraph is a full subgraph of $\Gamma_{A_{\{s,t\}}}$, 
because it follows from Lemma \ref{toy} (see the next section) that 
there is no square in $C_{\Gamma}$ 
containing both edges $[A_{\{s,t\}},tA_{\{s\}}]$ and $[A_{\{s,t\}},sA_{\{t\}}]$. 
Because $C_\Gamma$ is $C'\times C''$ and edges $[A_{\emptyset}, A_{\{s\}}]$ and $[A_{\emptyset}, A_{\{t\}}]$ 
of $C_{\Gamma}$ correspond to edges of $C'\times \{A_\emptyset\}$ and $\{A_\emptyset\}\times C''$, respectively, 
edges $[A_{\{t\}}, A_{\{s,t\}}]$ and $[A_{\{s\}}, A_{\{s,t\}}]$ of $C_{\Gamma}$ must correspond to 
edges of $C'\times \{A_{\{t\}}\}$ and $\{A_{\{s\}}\}\times C''$, respectively. 
Thus, the two middle vertices of the $3$-line full subgraph of $\Gamma_{A_{\{s,t\}}}=\Gamma'_{A_{\{s,t\}}}\ast \Gamma''_{A_{\{s,t\}}}$ 
belong to $\Gamma'_{A_{\{s,t\}}}$ and $\Gamma''_{A_{\{s,t\}}}$, respectively. 
This contradicts the fact that any $3$-line full subgraph of a join of two graphs is contained in either of the join factors.

We now show that $(2)\Rightarrow (3)$. 
Suppose that $\Gamma$ is decomposed as a join of two subgraphs $\Gamma'$ and $\Gamma''$
such that all edges between them are labeled by $2$. Then, we have a bijection 
\begin{equation}\label{delta}
\Delta_{\Gamma'}\times \Delta_{\Gamma''}\to \Delta_\Gamma;(T',T'')\mapsto T'\sqcup T''.
\end{equation}
In addition, because $A_{\Gamma}$ is a direct product of subgroups $A_{\Gamma'}$ and $A_{\Gamma''}$, 
we have a group isomorphism 
\begin{equation}\label{group}
A_{\Gamma'}\times A_{\Gamma''}\to A_\Gamma; (g',g'')\mapsto g'g''. 
\end{equation}
Clearly, (\ref{delta}) and (\ref{group}) imply a bijection from 
vertices of $C_{\Gamma'}\times C_{\Gamma''}$ to vertices of $C_\Gamma$ 
\begin{equation*}
\phi^0:C_{\Gamma'}^0\times C_{\Gamma''}^0\to C_{\Gamma}^0;(g'A_{T'},g''A_{T''})\mapsto g'g''A_{T'\sqcup T''}.
\end{equation*}
This can be extended to the cubical isomorphism 
\begin{equation*}
\phi:C_{\Gamma'}\times C_{\Gamma''}\to C_{\Gamma} 
\end{equation*}
such that $\phi([g'A_{T'}, g'A_{U'}]\times [g''A_{T''}, g''A_{U''}])=[g'g''A_{T'\cup T''}, g'g''A_{U'\cup U''}]$ 
for any $(T',T'')\in \Delta_{\Gamma'}\times \Delta_{\Gamma''}$ and 
$(g',g'')\in A_{\Gamma'}\times A_{\Gamma''}$. 
\end{proof}

\begin{lemma}[\cite{Charney} Lemma 3.2]\label{CM Lemma 3.2}
Suppose that $\Gamma$ is not a cone. 
Then, the action of $A_\Gamma$ on $C_\Gamma$ is minimal. 
That is, for any point $x\in C_\Gamma$, we have 
$Hull(A_\Gamma x)=C_\Gamma$ 
(the convex hull of the orbit of $x$ is all of $C_\Gamma$).
\end{lemma}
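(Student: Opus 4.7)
The plan is to prove minimality via a half-space argument. In a CAT(0) cube complex, the convex hull of any subset equals the intersection of the closed half-spaces containing it, so $\mathrm{Hull}(A_\Gamma x)=C_\Gamma$ is equivalent to the statement that no half-space of $C_\Gamma$ contains $A_\Gamma x$. It therefore suffices to produce, for every hyperplane $J$ of $C_\Gamma$ and every component $J^+$ of $C_\Gamma\setminus\!\!\setminus J$, some $g\in A_\Gamma$ satisfying $g\cdot J^+\subseteq J^-$: for then, assuming $A_\Gamma x\subseteq J^+$, we would have $gx\in g\cdot J^+\subseteq J^-$ while also $gx\in A_\Gamma x\subseteq J^+$, contradicting $J^+\cap J^-=\emptyset$.

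By Remark \ref{label}, every hyperplane of $C_\Gamma$ is an $A_\Gamma$-translate of a standard hyperplane $J_v$ dual to $[A_\emptyset,A_{\{v\}}]$ for some $v\in V(\Gamma)$; by equivariance it suffices to flip each such $J_v$. Since $\Gamma$ is not a cone, for each $v\in V(\Gamma)$ there exists $w\in V(\Gamma)$ with $(v,w)\notin E(\Gamma)$, and I claim that $w$ itself flips $J_v$. Consider the path $A_\emptyset\to A_{\{w\}}\to wA_\emptyset\to wA_{\{v\}}$ in $C_\Gamma$: its first two edges are of $w$-type, while its third is of $v$-type dual to $w\cdot J_v$, so none of its edges crosses $J_v$. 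Hence both $wA_\emptyset$ and $wA_{\{v\}}$ lie in the component $J_v^-$ containing $A_\emptyset$. Granting $w\cdot J_v\neq J_v$ (argued below), Remark \ref{label} implies that $w\cdot J_v$ and $J_v$ cannot cross, so the hyperplane $w\cdot J_v$, being connected and incident to vertices of $J_v^-$, lies entirely in $J_v^-$. A standard CAT(0) cube-complex fact then yields $w\cdot J_v^+=(w\cdot J_v)^+\subseteq J_v^-$, where $(w\cdot J_v)^+$ denotes the component containing $wA_{\{v\}}$.

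The main obstacle is justifying $w\cdot J_v\neq J_v$ whenever $w$ is not adjacent to $v$ in $\Gamma$. This amounts to verifying $w\notin\mathrm{stab}(J_v)$, and the natural route is to identify $\mathrm{stab}(J_v)$ with the standard parabolic $A_{\{v\}\cup\mathrm{lk}_\Gamma(v)}$, where $\mathrm{lk}_\Gamma(v)$ is the set of neighbors of $v$ in $\Gamma$. The argument would proceed by classifying all edges parallel to $[A_\emptyset,A_{\{v\}}]$ under the square-move equivalence: any $2$-cube in $C_\Gamma$ containing a $v$-type edge has the form $[gA_U,gA_{U\cup\{v,u\}}]$ with $u\in V(\Gamma)$ adjacent to $v$ and to every vertex of $U$, and iterating such moves keeps the resulting edges within the parabolic on $\{v\}\cup\mathrm{lk}_\Gamma(v)$. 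Lemma \ref{toy} of the next section, which constrains the $2$-cubes adjacent to a $v$-type edge, is expected to streamline this verification. Once $w\cdot J_v\neq J_v$ is established, the strategy of the previous paragraphs completes the proof.
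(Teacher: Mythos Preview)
The paper does not prove this lemma---it is quoted from \cite{Charney}---so there is no in-paper argument to compare against. Your flipping strategy is the right idea and matches \cite{Charney}, but the opening reduction is wrong: the claim that in a CAT(0) cube complex the CAT(0) convex hull of an arbitrary subset equals the intersection of the closed halfspaces containing it is false. For two diagonally opposite vertices of a single square the CAT(0) hull is the diagonal segment, yet no halfspace contains both points, so the halfspace-hull is the whole square. Thus ``no halfspace contains $A_\Gamma x$'' does not by itself yield $\mathrm{Hull}(A_\Gamma x)=C_\Gamma$ for arbitrary $x$. What your argument actually establishes---that every hyperplane can be flipped, hence skewered---does suffice, but only after combining it with cocompactness via the essential-core machinery of Caprace--Sageev \cite{MR2827012}; this is the route taken in \cite{Charney}.

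Your sketch of $w\notin\mathrm{stab}(J_v)$ also needs correction. The stabilizer of $J_v$ is $A_{\mathrm{lk}_\Gamma(v)}$, not $A_{\{v\}\cup\mathrm{lk}_\Gamma(v)}$: the generator $v$ itself moves $J_v$, since $[A_\emptyset,A_{\{v\}}]$ and $[vA_\emptyset,A_{\{v\}}]$ are distinct edges sharing the vertex $A_{\{v\}}$ and hence cannot be dual to a common hyperplane in a CAT(0) cube complex. Your conclusion survives (still $w\ne v$ and $w\notin\mathrm{lk}_\Gamma(v)$, so $w\notin A_{\mathrm{lk}_\Gamma(v)}$ by van der Lek), but the appeal to Lemma~\ref{toy} is misplaced: that lemma concerns a pair $s,t$ joined by an edge of label greater than $2$, whereas here $v$ and $w$ are not joined by any edge, so it says nothing about this situation. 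The parallelism analysis you outline---any square containing a $v$-type edge has its other direction labeled by some $u\in\mathrm{lk}_\Gamma(v)$---is already enough on its own to compute the stabilizer; just carry it out without invoking Lemma~\ref{toy}.
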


\begin{proposition}\label{finite normal subgroup}
Suppose that $\Gamma$ is not a cone. 
Then, a finite normal subgroup of $A_\Gamma$ is trivial. 
In particular, a finite center of  $A_\Gamma$ is trivial. 
Also, if $A_\Gamma$ is isomorphic to a direct product $A_1\times A_2$ and 
$A_1$ is finite, then $A_1$ is trivial. 
\end{proposition}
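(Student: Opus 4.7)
The plan is to combine the minimality of the action of $A_\Gamma$ on $C_\Gamma$ (Lemma~\ref{CM Lemma 3.2}) with the Bruhat--Tits fixed-point theorem. Let $N$ be a finite normal subgroup of $A_\Gamma$, and I would consider its fixed-point set
$$F := \{x \in C_\Gamma \mid n(x) = x \text{ for all } n \in N\}.$$
Since $C_\Gamma$ is a finite-dimensional (hence complete) CAT(0) cube complex and $N$ acts on it by isometries as a finite group, Bruhat--Tits produces a fixed point, so $F \ne \emptyset$. The fixed-point set of a single isometry of a CAT(0) space is closed and convex, and $F$ is an intersection of such sets, so $F$ is a nonempty closed convex subset of $C_\Gamma$.

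The next step is to verify that $F$ is $A_\Gamma$-invariant; this is precisely where normality of $N$ enters. Indeed, for $g \in A_\Gamma$, $n \in N$, and $x \in F$, one has $n(g(x)) = g(g^{-1}ng)(x) = g(x)$ because $g^{-1}ng \in N$. Now fix any $x \in F$. Then the orbit $A_\Gamma \cdot x$ is contained in $F$, and since $F$ is convex, $\mathrm{Hull}(A_\Gamma \cdot x) \subseteq F$. Applying Lemma~\ref{CM Lemma 3.2} gives $\mathrm{Hull}(A_\Gamma \cdot x) = C_\Gamma$, so $F = C_\Gamma$. In particular, every element of $N$ fixes the vertex $A_\emptyset$, whose stabilizer in $A_\Gamma$ is $A_\emptyset = \{1\}$. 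Hence $N = \{1\}$.

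The two further conclusions then follow immediately. If $Z(A_\Gamma)$ is finite, it is a finite normal subgroup of $A_\Gamma$, and therefore trivial by what we have just shown. If $A_\Gamma \cong A_1 \times A_2$ with $A_1$ finite, then $A_1$ (viewed as $A_1 \times \{1\}$) is a finite normal subgroup of $A_\Gamma$, hence trivial.

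I do not anticipate any genuine obstacle: both ingredients are standard, and the argument is short. The one subtle point worth flagging is that the normality of $N$ is essential for the invariance of $F$ under $A_\Gamma$; without it, $F$ is only invariant under the normalizer of $N$, and the appeal to minimality collapses.
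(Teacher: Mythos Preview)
Your proof is correct and follows essentially the same route as the paper: nonemptiness of the fixed set via the CAT(0) fixed-point theorem, $A_\Gamma$-invariance of the fixed set from normality, minimality (Lemma~\ref{CM Lemma 3.2}) to conclude the fixed set is all of $C_\Gamma$, and triviality of $N$ from the trivial stabilizer of $A_\emptyset$. The only cosmetic difference is that you explicitly invoke Bruhat--Tits and finite-dimensionality for completeness, whereas the paper simply asserts completeness.
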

\begin{proof}
Let $N$ be a finite normal subgroup of $A_\Gamma$. 
Set $$Fix(N):=\{x\in C_\Gamma | \ nx=x\text{ for any }n\in N \}.$$ 
Because $N$ is finite and $C_\Gamma$ is a complete CAT(0) space, we have $Fix(N)\neq \emptyset$.
Take any $x\in Fix(N)$. 
Then, $A_\Gamma x\subset Fix(N)$. 
Indeed, the normality of $N$ implies that, for any $g\in A_\Gamma$ and $n\in N$, there exists $n'\in N$ 
such that $ng=gn'$. Therefore, we have  $ngx=gn'x=gx$.
Because $C_\Gamma$ is CAT(0), $Fix(N)$ is convex. Hence, we have $ Hull(A_\Gamma x)\subset Fix(N)$. 
By Lemma \ref{CM Lemma 3.2}, we have $Hull(A_\Gamma x)=C_\Gamma$. 
Hence, $Fix(N)=C_\Gamma$. In particular, $A_\emptyset \in Fix(N)$. 
In general, $A_\emptyset$ is not fixed by any nontrivial element of $A_\Gamma$. 
Hence, $N$ must be trivial.
\end{proof}

%%%%%%%%%%%%%%%%%%%%%%%%%%
\section{Lemmas on local geometry of clique-cube complexes}\label{local_geom}
In this section, we state two lemmas related to the local geometry of clique-cube complexes. 
The first one is used in the proof of Proposition \ref{C_reducible_prop}. 
The second is used in the proof of Theorem \ref{main}. 

Recall that the dihedral group for any $r\in \mathbb{N}$ is defined as 
$$I_2(r):=\left\{
\begin{array}{ll}
\langle s,t \mid s^2=1, t^2=1, st\cdots s =ts\cdots t \ (\text{length } r)\rangle & \text{ if }r\text{ is odd},\\
\langle s,t \mid s^2=1, t^2=1, st\cdots t =ts\cdots s \ (\text{length } r)\rangle & \text{ if }r\text{ is even}. \\
\end{array}
\right.
$$
It is well-known that $\#I_2(r)=2r$. 

Let $A_\Gamma$ be an Artin group associated with a defining graph $\Gamma$ 
as in Subsection \ref{defining_graph}. 

\begin{lemma}\label{toy}
Let $e=(s,t)$ be an edge of $\Gamma$ with label $m$ greater than $2$. 
Then, there exists no square in $C_{\Gamma}$ 
containing both edges $[A_{\{s,t\}},tA_{\{s\}}]$ and $[A_{\{s,t\}},sA_{\{t\}}]$, that is, 
there exist no $p,q\in \mathbb{Z}$ such that $ts^p=st^q$ in $A_\Gamma$. 
\end{lemma}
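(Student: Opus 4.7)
The plan is to first translate the non-existence of the desired square into an algebraic identity in $A_{\Gamma}$, and then refute that identity by projecting to the Coxeter quotient $W_{\Gamma}$. The two asserted formulations of the lemma (no square exists, no $p,q$ with $ts^p = st^q$) are equivalent once this translation is made, so it suffices to rule out the identity at the group level.

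For the translation, first I would identify the types of the two given edges via Remark \ref{label}: the edge $[A_{\{s,t\}}, tA_{\{s\}}]$ has type $t$ (since adjoining $t$ to $\{s\}$ produces $\{s,t\}$), and the edge $[A_{\{s,t\}}, sA_{\{t\}}]$ has type $s$. Any $2$-cube $Q$ containing both edges therefore has types $\{s,t\}$, so $Q = [hA_T, hA_{T \cup \{s,t\}}]$ for some $h \in A_\Gamma$ and some $T \subset V$ with $s,t \notin T$. The vertex $A_{\{s,t\}}$ cannot be the minimum corner of $Q$ (this would require $\{s,t\} = T$, contradicting $s,t\notin T$), and it cannot be an intermediate vertex of $Q$ either, since at an intermediate vertex the two incident edges of $Q$ consist of one up-edge and one down-edge with respect to coset inclusion, whereas both given edges go from $A_{\{s,t\}}$ to strictly smaller cosets. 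Hence $A_{\{s,t\}}$ is the maximum corner, which forces $T = \emptyset$ and $h \in A_{\{s,t\}}$, and the two intermediate vertices $hA_{\{s\}}, hA_{\{t\}}$ must coincide with $tA_{\{s\}}, sA_{\{t\}}$ respectively. This is exactly the condition $h = ts^p = st^q$ for some $p,q \in \mathbb{Z}$.

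For the algebraic step, I would assume for contradiction $ts^p = st^q$ in $A_{\Gamma}$ and apply the canonical surjection $\pi\colon A_{\Gamma} \to W_{\Gamma}$. Since $\pi(s)^2 = \pi(t)^2 = 1$, the image $\pi(ts^p)$ lies in $\{\pi(t), \pi(ts)\}$ and $\pi(st^q)$ lies in $\{\pi(s), \pi(st)\}$, depending on the parities of $p$ and $q$. By the standard parabolic subgroup theorem for Coxeter groups, $\langle \pi(s), \pi(t)\rangle \le W_{\Gamma}$ is isomorphic to $I_2(m)$. In $I_2(m)$ with $m \geq 3$, the identity $\pi(t) = \pi(s)$ is impossible (distinct reflections), $\pi(t) = \pi(st)$ forces $\pi(s) = 1$, $\pi(ts) = \pi(s)$ forces $\pi(t) = 1$, and $\pi(ts) = \pi(st)$ forces $(\pi(st))^2 = 1$, contradicting $\mathrm{ord}(\pi(st)) = m \geq 3$. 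All four pairings are thus excluded, yielding $\pi(ts^p) \neq \pi(st^q)$, the desired contradiction.

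The main obstacle will be the geometric translation, which is combinatorial but requires carefully discarding the bottom-corner and intermediate-vertex configurations for $A_{\{s,t\}}$ inside the hypothetical square. Once the single equation $ts^p = st^q$ is isolated, the reduction via $W_{\Gamma}$ and $I_2(m)$ is essentially mechanical, and the hypothesis $m > 2$ enters only in this final step.
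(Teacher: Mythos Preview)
Your proof is correct and follows essentially the same strategy as the paper: translate the existence of the square into the equation $ts^p=st^q$ in $A_\Gamma$, then kill it by passing to the Coxeter quotient $W_{\{s,t\}}\cong I_2(m)$. The only difference is a minor streamlining: the paper first applies the ``total exponent'' homomorphism $A_{\{s,t\}}\to\mathbb{Z}$ (sending both generators to $s$) to force $p=q$, and then splits on the parity of $p$ into two cases in $W_{\{s,t\}}$; you skip that preliminary reduction and instead check all four parity combinations $(p\bmod 2,\,q\bmod 2)$ directly in $I_2(m)$. Your geometric translation is also spelled out more carefully than the paper's (which simply asserts the shape of the fourth vertex), but the content is the same.
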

\noindent See Figure \ref{3-line_fig}. 
\begin{proof}
Assume that we have a square in $C_{\Gamma}$ 
containing both edges $[A_{\{s,t\}},tA_{\{s\}}]$ and $[A_{\{s,t\}},sA_{\{t\}}]$. 
Then, the square has four vertices $A_{\{s,t\}}$, $tA_{\{s\}}$, $sA_{\{t\}}$, and 
$ts^pA_\emptyset=st^qA_\emptyset$ 
where $p,q$ are some integers. 
Then, we have $p=q$. 
Indeed, if we consider the projection $A_{\{s,t\}}\to A_{\{s\}}\cong\mathbb{Z}$ 
defined by $s\mapsto s$ and $t\mapsto s$, 
then $ts^p=st^q$ in $A_{\{s,t\}}$ implies $s^{1+p}=s^{1+q}$ in $A_{\{s\}}$, and thus $p=q$. 
Hence, we have $ts^p=st^p$ in $A_{\{s,t\}}$. 
By the natural projection $A_{\{s,t\}}\to W_{\{s,t\}}$, 
$ts^p=st^p$ in $A_{\{s,t\}}$ implies 
$ts=st$ in $W_{\{s,t\}}$ if $p$ is odd 
and $t=s$ in $W_{\{s,t\}}$ if $p$ is even. 
Both cases contradict the fact that 
$ts\neq st$ and $s\neq t$ in $W_{\{s,t\}}$ 
in the case where $m>2$. 
\end{proof}
\begin{lemma}\label{twist}
Let $e=(s,t)$ be an edge of $\Gamma$ with label $m$ greater than $2$.
Let $\tau$ be an alternating word 
$$\tau:=st\cdots s\left\{
\begin{array}{l}
\text{of length }m \text{ if }m\text{ is odd}, \\
\text{of length }m+1 \text{ if }m\text{ is even}.\\
\end{array}
\right.
$$
Let $U\in \Delta_\Gamma$ with $s,t\in U$. 
Then, there exists no square in $C_{\Gamma}$ 
containing both edges $[A_U,A_{U\setminus\{s\}}]$ and $[A_U,\tau A_{U\setminus\{t\}}]$, that is, 
there exists no $g\in A_{U\setminus\{s\}}$ such that $gA_{U\setminus\{t\}}=\tau A_{U\setminus\{t\}}$.
Additionally, there exists no square in $C_{\Gamma}$ 
containing both edges $[A_U,A_{U\setminus\{t\}}]$ and $[A_U,\tau A_{U\setminus\{s\}}]$, that is, 
there exists no $g\in A_{U\setminus\{t\}}$ such that $gA_{U\setminus\{s\}}=\tau A_{U\setminus\{s\}}$.
\end{lemma}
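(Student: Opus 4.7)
The plan is to reduce to a length argument in the Coxeter group $W_U$ and conclude via Matsumoto's theorem. The statement of the lemma already translates the non-existence of each square into a non-existence of a group-theoretic decomposition: the first half asks for $\tau\notin A_{U\setminus\{s\}}\cdot A_{U\setminus\{t\}}$ in $A_U$, and the second half for $\tau\notin A_{U\setminus\{t\}}\cdot A_{U\setminus\{s\}}$. So it suffices to rule out these two inclusions.

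I would push the hypothetical decomposition down to $W_U$ via the natural projection $A_U\to W_U$, which sends standard parabolic subgroups to standard parabolic subgroups. A factorization $\tau=gh$ with $g\in A_{U\setminus\{s\}}$, $h\in A_{U\setminus\{t\}}$ yields $\bar\tau=\bar g\bar h$ in $W_U$ with $\bar g\in W_{U\setminus\{s\}}$, $\bar h\in W_{U\setminus\{t\}}$. A direct computation in the dihedral group $W_{\{s,t\}}$, using the braid relation $(st\cdots)_m=(ts\cdots)_m$ to collapse a trailing $s^2$ when $m$ is even, identifies $\bar\tau$ as the longest element $w_0^{\{s,t\}}$ of length $m$ when $m$ is odd, and as $w_0^{\{s,t\}}s$ of length $m-1$ when $m$ is even. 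Because $m>2$, this gives $\ell_{W_U}(\bar\tau)\ge 3$ and $\mathrm{supp}(\bar\tau)=\{s,t\}$.

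The heart of the argument is then the Coxeter-theoretic claim $\bar\tau\notin W_{U\setminus\{s\}}\cdot W_{U\setminus\{t\}}$. I would first upgrade any factorization $\bar\tau=\bar g\bar h$ to a reduced one via the deletion condition: whenever $\ell(\bar\tau)<\ell(\bar g)+\ell(\bar h)$, the concatenation of reduced expressions for $\bar g$ and $\bar h$ is non-reduced, so a pair of letters may be deleted; placing those deletions back into the two factors yields a decomposition of strictly smaller total length, and iterating produces $\bar u\in W_{U\setminus\{s\}}$ and $\bar v\in W_{U\setminus\{t\}}$ with $\bar\tau=\bar u\bar v$ and $\ell(\bar\tau)=\ell(\bar u)+\ell(\bar v)$. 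Concatenating reduced expressions for $\bar u$ and $\bar v$ then gives a reduced expression for $\bar\tau$, whose support equals $\{s,t\}$ by Matsumoto's theorem. Hence the letters of $\bar u$ lie in $(U\setminus\{s\})\cap\{s,t\}=\{t\}$ and those of $\bar v$ in $(U\setminus\{t\})\cap\{s,t\}=\{s\}$, so after applying $t^2=s^2=1$ we obtain $\bar u\in\{1,t\}$ and $\bar v\in\{1,s\}$. Therefore $\bar\tau\in\{1,s,t,ts\}$, an element of length at most $2$, contradicting $\ell(\bar\tau)\ge 3$. The second assertion follows by the same argument with $s$ and $t$ interchanged, ending in $\bar\tau\in\{1,s,t,st\}$ and the same length contradiction.

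The main obstacle is the reduced-factorization step; although it is a standard consequence of the deletion condition, it deserves care since that is where an arbitrary product decomposition gets converted into one to which Matsumoto's support invariance applies. Everything after that reduction is the clean length estimate $\ell(\bar\tau)\ge 3$ coming from $m>2$, which forces $\bar\tau$ outside the two-element-or-fewer sets above.
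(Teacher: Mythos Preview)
Your argument is correct and takes a genuinely different route from the paper's proof. The paper proceeds by induction on $\#U$. The base case $U=\{s,t\}$ is handled by projecting $A_{\{s,t\}}\to A_{\{s\}}\cong\mathbb{Z}$ to pin down exponents, then projecting to $W_{\{s,t\}}$ and deriving a contradiction via dihedral group orders (comparing $\#I_2(r)$ for various $r$), with separate subcases according to the parities of $m$ and of the exponent $p$. The inductive step is geometric: assuming a forbidden square exists at level $U$, the flag condition in the CAT(0) cube complex forces three squares around $A_U$ to span a cube, and one face of that cube is the forbidden square at level $U\setminus\{u_1\}$, contradicting the inductive hypothesis.

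Your approach bypasses both the induction and the case analysis: you push the putative factorization $\tau=gh$ down to $W_U$ in one step, use the deletion condition to make the factorization length-additive, and then Matsumoto's support invariance forces the factors into $\langle t\rangle$ and $\langle s\rangle$, contradicting $\ell(\bar\tau)\ge 3$. This is shorter and more uniform in $\#U$, and it isolates exactly where $m>2$ enters (the length bound $\ell(\bar\tau)\ge 3$). The paper's proof, on the other hand, is more self-contained (no appeal to Matsumoto or the deletion condition) and its inductive step illustrates how the CAT(0) link condition propagates the obstruction, which fits the paper's emphasis on the local geometry of $C_\Gamma$. The reduced-factorization step you flagged as the delicate point is indeed standard, and your sketch via iterated deletion is the right justification.
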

\noindent See Figure \ref{square_fig}. 
\begin{figure}
\begin{center}
\includegraphics[width=8cm,pagebox=cropbox,clip]{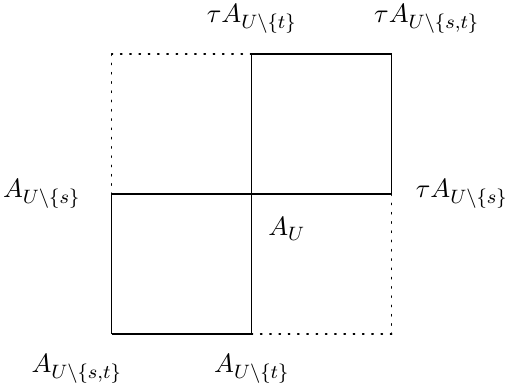} 
\caption{Squares around $A_{U}$.}\label{square_fig}
\end{center}
\end{figure}
\begin{proof}
We prove this lemma by induction on $k=\#U$.

(1) The base case where $k=2$. 
In this case, $U=\{s,t\}$.
It is sufficient to show that, for any $p\in \mathbb{Z}$, 
$t^p A_{U\setminus\{t\}}\neq \tau A_{U\setminus\{t\}}$ and $s^p A_{U\setminus\{s\}}\neq \tau A_{U\setminus\{s\}}$.

(1-1) Suppose that $m$ is odd. 
Note that $\tau=st\cdots s$ is equal to an alternating word $ts\cdots t$ of length $m$ in 
$$A_{\{s,t\}}=\langle s,t \mid st\cdots s =ts\cdots t \ (\text{length } m)\rangle= A_U\subset  A_\Gamma.$$ 
We assume that there exists $p\in \mathbb{Z}$ 
such that $t^p A_{U\setminus\{t\}}=\tau A_{U\setminus\{t\}}$. 
Then, we have $t^{-p}\tau \in A_{U\setminus\{t\}}$. 
In contrast, we clearly have that $t^{-p}\tau\in A_{\{s,t\}}$. 
Because $U\setminus\{t\}=\{s\}$, 
there exists $q\in \mathbb{Z}$ such that $t^ps^q=\tau$ in $A_{\{s,t\}}$.
Then, we have $p+q=m$. 
Indeed, if we consider the projection $A_{\{s,t\}}\to A_{\{s\}}\cong\mathbb{Z}$ 
defined by $s\mapsto s$ and $t\mapsto s$, 
then $t^ps^q=\tau$ in $A_{\{s,t\}}$ implies $s^{p+q}=s^m$ in $A_{\{s\}}$, and thus $p+q=m$. 
Hence, we have $t^ps^{m-p}=st\cdots s=ts\cdots t$ in $A_{\{s,t\}}$. 

(1-1-1) 
We treat the case where $p$ is odd. 
By the natural projection $A_{\{s,t\}}\to W_{\{s,t\}}$,
$t^ps^{m-p}=ts\cdots t$ in $A_{\{s,t\}}$ and 
$t^ps^{m-p}=t$ in $W_{\{s,t\}}$ imply 
$t=ts\cdots t$ in $W_{\{s,t\}}$. 
Thus, we have $1=s\cdots t$ in $W_{\{s,t\}}$. 
This means that we have a projection $I_2((m-1)/2)\to W_{\{s,t\}}\cong I_2(m)$ 
defined by $s\mapsto s$ and $t\mapsto t$. 
Thus, we have $m-1=\#I_2((m-1)/2)\ge \#I_2(m)=2m$. This contradicts $m> 2$. 

(1-1-2) We treat the case where $p$ is even. 
In this case, $t^ps^{m-p}=st\cdots s$ in $A_{\{s,t\}}$ and 
$t^ps^{m-p}=s$ in $W_{\{s,t\}}$ imply 
$s=st\cdots s$ in $W_{\{s,t\}}$. 
Thus, we have $1=s\cdots t$ in $W_{\{s,t\}}$. 
This means that we have a projection $I_2((m-1)/2)\to W_{\{s,t\}}\cong I_2(m)$ 
defined by $s\mapsto s$ and $t\mapsto t$. 
Thus, we have $m-1=\#I_2((m-1)/2)\ge \#I_2(m)=2m$. This contradicts $m> 2$. 

By (1-1-1) and (1-1-2), for any $p\in \mathbb{Z}$, we have $t^p A_{U\setminus\{t\}}\neq \tau A_{U\setminus\{t\}}$. 
By the same argument, for any $p\in \mathbb{Z}$, we have 
$s^p A_{U\setminus\{s\}}\neq \tau A_{U\setminus\{s\}}$.

(1-2) Suppose that $m$ is even. 
Note that alternating words $st\cdots t$ and $ts\cdots s$ of length $m$ are equal in 
$$A_{\{s,t\}}=\langle s,t \mid st\cdots t =ts\cdots s \ (\text{length } m)\rangle= A_U\subset  A_\Gamma.$$ 
We assume that there exists $p\in \mathbb{Z}$ 
such that $t^p A_{U\setminus\{t\}}=\tau A_{U\setminus\{t\}}$. 
Then, from the same argument as in (1-1), 
there exists $q\in \mathbb{Z}$ such that $t^ps^q=\tau$ in $A_{\{s,t\}}$.
Thus, we have $p+q=m+1$. 
Indeed, if we consider the projection $A_{\{s,t\}}\to A_{\{s\}}\cong\mathbb{Z}$ 
defined by $s\mapsto s$ and $t\mapsto s$, 
then $t^ps^q=\tau$ in $A_{\{s,t\}}$ implies $s^{p+q}=s^{m+1}$ in $A_{\{s\}}$, and thus $p+q=m+1$. 
Hence, we have $t^ps^{m+1-p}=st\cdots s$ in $A_{\{s,t\}}$. 
This implies $t^ps^{m-p}=st\cdots t$ in $A_{\{s,t\}}$. 
Thus, we have $t^ps^{m-p}=st\cdots t=ts\cdots s$ in $A_{\{s,t\}}$. 

(1-2-1) We treat the case where $p$ is odd. 
By the natural projection $A_{\{s,t\}}\to W_{\{s,t\}}$,
$t^ps^{m-p}=ts\cdots s$ in $A_{\{s,t\}}$ and 
$t^ps^{m-p}=ts$ in $W_{\{s,t\}}$ imply 
$ts=ts\cdots s$ in $W_{\{s,t\}}$. 
Thus, $1$ and an alternating word $s\cdots t$ of length $m-2$ are equal in $W_{\{s,t\}}$. 
This means that we have a projection $I_2((m-2)/2)\to W_{\{s,t\}}\cong I_2(m)$ 
defined by $s\mapsto s$ and $t\mapsto t$. 
Thus, we have $m-2=\#I_2((m-2)/2)\ge \#I_2(m)=2m$. This contradicts $m> 2$. 

(1-2-2)
We treat the case where $p$ is even. 
In this case, $t^ps^{m-p}=st\cdots t$ in $A_{\{s,t\}}$ and 
$t^ps^{m-p}=1$ in $W_{\{s,t\}}$ implies 
$1=st\cdots t$ in $W_{\{s,t\}}$. 
This means that we have a projection $I_2(m/2)\to W_{\{s,t\}}\cong I_2(m)$ 
defined by $s\mapsto s$ and $t\mapsto t$. 
Thus, we have $m=\#I_2(m/2)\ge \#I_2(m)=2m$. This contradicts $m> 2$. 

By (1-2-1) and (1-2-2), for any $p\in \mathbb{Z}$, we have $t^p A_{U\setminus\{t\}}\neq \tau A_{U\setminus\{t\}}$. 
By the same argument, for any $p\in \mathbb{Z}$, we have 
$s^p A_{U\setminus\{s\}}\neq \tau A_{U\setminus\{s\}}$.

(2) Suppose that $k>2$ and the statement is true for $k-1$. 
Let $U=\{u_1, \ldots, u_{k-2}, s, t\}$, where $\# U=k$. 
Assume that there exists $g\in A_{U\setminus\{s\}}$ such that $gA_{U\setminus\{t\}}=\tau A_{U\setminus\{t\}}$.
Then, we have a square $[gA_{U\setminus\{s,t\}}, A_U]$ in $C_{\Gamma}$. 
Its vertices are 
$A_U$, $\tau A_{U\setminus\{t\}}$, $gA_{U\setminus\{s,t\}}$, and $A_{U\setminus\{s\}}$. 
Because $C_{\Gamma}$ is CAT(0), 
this square spans a cube together with other two squares $[A_{U\setminus\{u_1, s\}}, A_{U}]$ and $[\tau A_{U\setminus\{u_1, t\}}, A_{U}]$. 
See Figure \ref{local_fig}. 

\begin{figure}
\begin{center}
\includegraphics[width=12cm,pagebox=cropbox,clip]{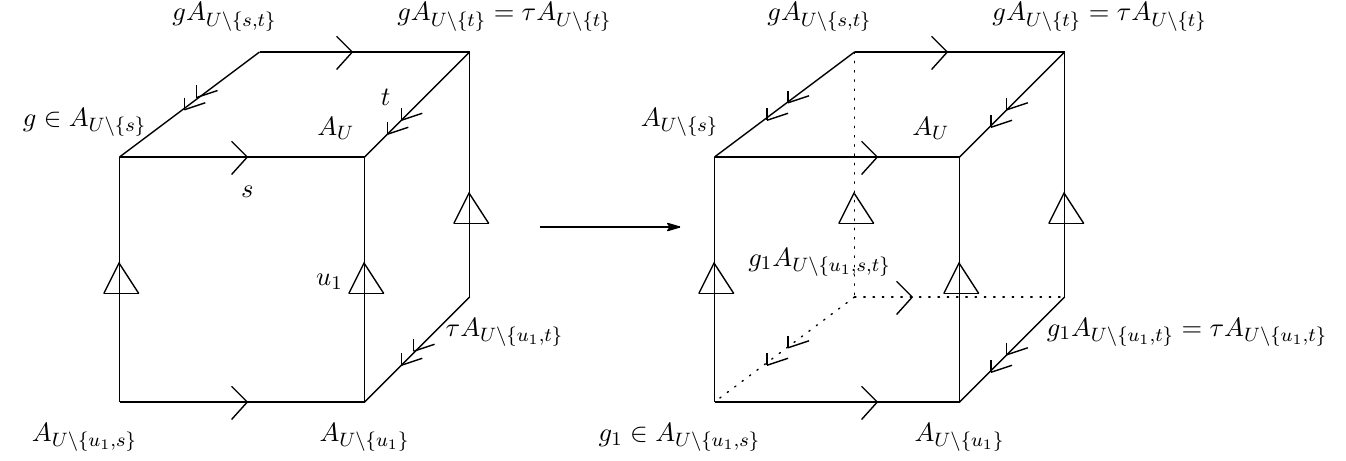} 
\caption{Three squares around $A_U$ span a cube.}\label{local_fig}
\end{center}
\end{figure}

This cube contains another square $[g_1A_{U\setminus\{u_1, s, t\}}, A_{U\setminus\{u_1\}}]$ as a face, 
where $g_1\in A_{U\setminus\{u_1, s\}}=A_{(U\setminus\{u_1\})\setminus\{s\}}$. 
Then, we have $g_1 A_{(U\setminus\{u_1\})\setminus\{t\}}=\tau A_{(U\setminus\{u_1\})\setminus\{t\}}$. 
Because $\#(U\setminus\{u_1\})=k-1$, 
this contradicts the inductive assumption.
\end{proof}

%%%%%%%%%%
\section{Proof of Theorem \ref{main}}\label{main_proof}
In this section, let $A_\Gamma$ be an Artin group associated with a graph 
$\Gamma$ that has at least three vertices, and suppose that $\Gamma$ is not a cone. 
We show our main theorem (Theorem~\ref{main}), that is, the following are equivalent:

\begin{enumerate}
\item $A_\Gamma$ is irreducible, that is, 
$\Gamma$ cannot be decomposed as a join of two subgraphs such that all edges between them are labeled by $2$; 
\item $A_\Gamma$ has a WPD contracting element with respect to the isometric action on the clique-cube complex;
\item $A_\Gamma$ is acylindrically hyperbolic;
\item $A_\Gamma$ is directly indecomposable, that is, it cannot be decomposed as a direct product 
of two nontrivial subgroups. 
\end{enumerate}

%%%%%%%%%%%%%%%%%%
\subsection{Proof of $(2)\Rightarrow(3),(3)\Rightarrow(4),(4)\Rightarrow(1)$ in Theorem \ref{main}}\label{part}
We show that $(2)\Rightarrow(3),(3)\Rightarrow(4),(4)\Rightarrow(1)$ in Theorem \ref{main}. 

First, $(2)\Rightarrow(3)$ follows from Theorem \ref{BBF}. 

Next, we show $(3)\Rightarrow(4)$. 
Let $A_\Gamma$ be acylindrically hyperbolic. If $A_\Gamma$ is isomorphic to a direct product $A_1\times A_2$, then 
either $A_1$ or $A_2$ is finite by acylindrical hyperbolicity \cite[Corollary 7.3]{Osin}. 
Proposition \ref{finite normal subgroup} implies that the finite one is trivial. 
Hence, $A_\Gamma$ is directly indecomposable. 

Finally, $(4)\Rightarrow(1)$ is clear. 

%%%%%%%%%%%%%%%%%%%%%%%%%
\subsection{Proof of $(1)\Rightarrow(2)$ in Theorem \ref{main}}
In this subsection, we give a proof of $(1)\Rightarrow(2)$ in Theorem \ref{main}. 
Suppose that $A_\Gamma$ is irreducible. 
We will construct an element $\gamma\in A_\Gamma$ and 
show that $\gamma$ is a WPD contracting element with respect to the action on the clique-cube complex. 
When $\Gamma$ is indecomposable, that is, not a join, a WPD contracting element is already given in \cite[Remark 4.5]{Charney}. 
Hence, it is sufficient to treat the case where $\Gamma$ is decomposable. 
From now on, we suppose that $\Gamma$ is decomposable. 

We consider a unique join decomposition $\Gamma=\ast_{1\le i\le k}\Gamma_i$ 
into indecomposable factors (see Lemma \ref{join-decomposition_lem}). 
We set $V_i=V(\Gamma_i)$ and $E_i=E(\Gamma_i)$ for each $i\in\{1,\ldots,k\}$. 
Then, for every $i\in\{1,\ldots,k\}$, the complement graph $(\Gamma_i)^c$ of $\Gamma_i$ 
is connected and the complement graph $\Gamma^c$ of $\Gamma$ is 
the disjoint union of connected components $\Gamma^c=\bigsqcup_{1\le i \le k}(\Gamma_i)^c$. 

We define $Q(\Gamma)$ as a finite simple graph with 
\begin{itemize}
\item the vertex set $V(Q(\Gamma))=\{V_i\}_{1\leq i \leq k}$, and
\item the edge set $E(Q(\Gamma))=\{(V_i,V_j), (V_j,V_i) \mid 1\le i<j\le k,(\Gamma_i\ast\Gamma_j)^t\text{ is connected}\}$. 
\end{itemize}
Note that $(\Gamma_i)^t$ is connected because $(\Gamma_i)^c$ is connected, 
$V((\Gamma_i)^c)=V((\Gamma_i)^t)=V_i$, and $E((\Gamma_i)^c)\subset E((\Gamma_i)^t)$. 
Note that the following are equivalent for different $i, j\in \{1,\ldots,k\}$: 
\begin{itemize}
\item $(\Gamma_i\ast\Gamma_j)^t$ is connected;
\item there exists an edge with label greater than $2$ between a vertex of $\Gamma_i$ and a vertex of $\Gamma_j$.
\end{itemize}

We confirm our setting: 
\begin{enumerate}
\item $\Gamma$ is decomposable, i.e., $\Gamma^c$ is not connected, that is, $k\ge 2$; 
\item $\Gamma$ is not a cone, i.e., every connected component of $\Gamma^c$ has at least two vertices, 
that is, $\#\Gamma_i\ge 2$ for any $i\in\{1,\ldots,k\}$; 
\item $A_\Gamma$ is irreducible, i.e., $\Gamma^t$ is connected, that is, $Q(\Gamma)$ is connected. 
\end{enumerate}

We take a spanning tree $T$ of $Q(\Gamma)$. 
We regard $T$ as a rooted tree with the root $V_1$.
By trading indices of $V_2,\ldots,V_k$ if necessary, 
we suppose that $i<j$ only if $V_i$ is not farther than $V_j$ from $V_1$ in $T$. 
For each $i,j$ with $i<j$ and $(V_i,V_j)\in E(T)$, 
take an edge $e_{i,j}\in E(\Gamma)$ with label $m_{i,j}=\mu(e_{i,j})$ greater than $2$ 
with $s_{i,j}:=s_{e_{i,j}}\in V(\Gamma_i)$ and $t_{i,j}:=t_{e_{i,j}}\in V(\Gamma_j)$, 
and set $e_{j,i}:=\iota(e_{i,j})$. 
For any $i,j\in \{1,\ldots,k\}$ with $(V_i,V_j)\in E(T)$, 
let $\tau_{i,j}$ be an alternating word 
$$\tau_{i,j}=s_{i,j}t_{i,j}\cdots s_{i,j}$$ of length $m_{i,j}$ if $m_{i,j}$ is odd, 
and let $\tau_{i,j}$ be an alternating word 
$$\tau_{i,j}=s_{i,j}t_{i,j}\cdots s_{i,j}$$ of length $m_{i,j}+1$ if $m_{i,j}$ is even. 

\begin{lemma}\label{closed}
There exist $n\in \mathbb{N}$, 
for each $i\in \{1,\ldots,k\}$, 
a closed path 
$(v_{i,1},\ldots,v_{i, n},v_{i, n+1})$ with $v_{i,1}=v_{i,n+1}$ on $(\Gamma_{i})^c$ 
passing through every vertex at least once, 
and for any $i, j\in \{1,\ldots,k\}$ with $(V_i,V_j)\in E(T)$, 
$l(i,j)\in\{1,\dots,n\}$ such that $(v_{i,l(i,j)},v_{j,l(i,j)})=(s_{i,j},t_{i,j})(=e_{i,j})$ 
and $l(j,i)=l(i,j)$. 
\end{lemma}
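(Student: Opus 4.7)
The plan is to fix the positions $l(i,j)$ in advance via a common sequence of ``time slots'' along the walks, and then, for each $i$, build the closed walk on $(\Gamma_i)^c$ by concatenating segments joining the prescribed vertices.

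First I would enumerate the edges of $T$ in any fixed order as $f_1,\dots,f_{k-1}$, with $f_\alpha=\{V_{i_\alpha},V_{j_\alpha}\}$ ($i_\alpha<j_\alpha$), and write $u_{i_\alpha,\alpha}:=s_{i_\alpha,j_\alpha}\in V_{i_\alpha}$ and $u_{j_\alpha,\alpha}:=t_{i_\alpha,j_\alpha}\in V_{j_\alpha}$. Choose positive integers $g_0,g_1,\dots,g_{k-1}$ (whose size and parity will be fixed later), and set $l_\alpha:=1+\sum_{\beta<\alpha}g_\beta$, $n:=\sum_\beta g_\beta$, and $l(i,j):=l(j,i):=l_\alpha$ for the unique $\alpha$ with $f_\alpha=\{V_i,V_j\}$. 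Then $l(i,j)\in\{1,\dots,n\}$ and the symmetry condition $l(i,j)=l(j,i)$ hold automatically.

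For each $i$ I would construct the closed walk on $(\Gamma_i)^c$ by concatenating $k$ consecutive segments of lengths $g_0,\dots,g_{k-1}$: at each time $l_\alpha$ with $V_i\in f_\alpha$ the walk is required to be at $u_{i,\alpha}$, and at every other time it is free. Because $(\Gamma_i)^c$ is connected, any two of its vertices can be joined by walks of all sufficiently large lengths of the correct parity (and of all sufficiently large lengths when $(\Gamma_i)^c$ is not bipartite); taking the $g_\alpha$ much larger than $\#V(\Gamma)$ plus the diameter of each $(\Gamma_i)^c$ leaves plenty of room to route the walks, and one of the segments can be inflated into a tour visiting every vertex of $V_i$.

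The main obstacle will be parity in the bipartite case. If $(\Gamma_i)^c$ is bipartite with class function $p_i:V_i\to\mathbb F_2$, then $p_i(v_{i,s})\equiv p_i(v_{i,1})+s-1\pmod 2$, so the constraint $v_{i,l_\alpha}=u_{i,\alpha}$ forces $p_i(v_{i,1})+l_\alpha\equiv p_i(u_{i,\alpha})+1\pmod 2$ for every $\alpha$ with $V_i\in f_\alpha$, while the closing condition $v_{i,n+1}=v_{i,1}$ forces $n$ to be even. Viewing the unknowns as $x_i:=p_i(v_{i,1})\in\mathbb F_2$ (one per bipartite factor) and $y_\alpha:=l_\alpha\bmod 2$ (one per tree edge), these conditions become $\mathbb F_2$-linear relations $x_i+y_\alpha\equiv p_i(u_{i,\alpha})+1$ living on the bipartite incidence graph between bipartite factors $V_i$ and tree edges $f_\alpha$. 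Any cycle in that incidence graph would project to a cycle in $T$, which is impossible, so the system is consistent and can be solved by propagation along $T$. Once the parities are fixed, choosing each $g_\alpha$ large and with the prescribed parity simultaneously realizes all $k$ walks.
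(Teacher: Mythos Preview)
Your proof is correct, but it takes a genuinely different and more elaborate route than the paper's. The paper's argument is very short: for each $i$ it takes any closed walk on $(\Gamma_i)^c$ visiting every vertex, pads them all to a common length $n=\prod_i n_i$ by concatenation of copies, and then processes the vertices of $T$ in order of increasing index. For each $j\ge 2$ there is a \emph{unique} $i<j$ with $(V_i,V_j)\in E(T)$ (the parent of $V_j$ in the rooted tree), so a single cyclic rotation of the already-built walk for $j$ aligns $v_{j,l(i,j)}$ with $t_{i,j}$ at the time $l(i,j)$ determined by the walk for~$i$. Because one begins with closed walks and only permutes cyclically, parity never enters; bipartiteness of $(\Gamma_i)^c$ is simply irrelevant. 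Your approach instead fixes the meeting times $l_\alpha$ in advance and then builds each walk segment by segment, which forces you to confront the $\mathbb F_2$-parity obstruction and resolve it via the acyclicity of your incidence graph (your argument that a cycle there would yield a genuine cycle in $T$ is sound, and the residual freedom in $g_{k-1}$ does handle the global constraint $n\equiv 0$). Both proofs ultimately lean on $T$ being a tree, but at different moments: the paper exploits the unique-parent property to reduce to one alignment constraint per vertex, handled by rotation; you exploit acyclicity to guarantee solvability of a linear system. The paper's route is shorter and sidesteps the bipartite discussion entirely; yours buys explicit control over the positions $l(i,j)$, though that control is not used downstream.
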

\begin{proof}
For any $i\in \{1,\ldots,k\}$, 
we consider the minimum length $n_i$ of closed paths 
on $(\Gamma_{i})^c$ passing through every vertex at least once. 
Set $n=\prod_{1\le i\le k}n_i$. 
Then, for any $i\in \{1,\ldots,k\}$, 
by concatenating $\frac{n}{n_i}$ copies 
of a closed path of length $n_i$ on $(\Gamma_{i})^c$ 
passing through every vertex at least once, 
we have a closed path $(v'_{i,1},\ldots,v'_{i,n},v'_{i, n+1})$ 
with $v'_{i,1}=v'_{i,n+1}$ on $(\Gamma_{i})^c$ 
passing through every vertex at least $\frac{n}{n_i}$ times (in particular, at least once). 

We set $(v_{1,1},\ldots,v_{1, n},v_{1, n+1}):=(v'_{1,1},\ldots,v'_{1, n},v'_{1, n+1})$.
For any $j\in \{2,\ldots,k\}$, we define 
$(v_{j,1},\ldots,v_{j, n},v_{j, n+1})$ inductively as follows. 
Take $j\in \{2,\ldots,k\}$. 
Assume that $(v_{i,1},\ldots,v_{i, n},v_{i, n+1})$ 
is defined for $i\in \{1,\ldots, j-1\}$ with $(V_i,V_j)\in E(T)$. 
Then, we set $l(i,j)$ as the minimum $l$ such that $v_{i,l}=s_{i,j}$. 
By a cyclic permutation of  $v'_{j,1},\ldots,v'_{j, n}$, 
we have $v_{j,1},\ldots,v_{j, n}$ such that $v_{j,l(i,j)}=t_{i,j}$. 
By setting $v_{j,n+1}:=v_{j,1}$, we have 
$(v_{j,1},\ldots,v_{j, n},v_{j, n+1})$. 
For any $i,j$ with $1\le i<j\le k$ and $(V_i,V_j)\in E(T)$, we set $l(j,i):=l(i,j)$. 
\end{proof}

We take $n\in \mathbb{N}$, $(v_{i,1},\ldots,v_{i, n},v_{i, n+1})$ for each $i\in \{1,\ldots,k\}$,
and $l(i,j)\in\{1,\dots,n\}$ for any $i, j\in \{1,\ldots,k\}$ with $(V_i,V_j)\in E(T)$
as in Lemma~\ref{closed}.
For $1\leq l\le n$, set 
$$\lambda_l:=v_{1,l}v_{2,l}\cdots v_{k,l}.$$
For any $i, j\in \{1,\ldots,k\}$ with $(V_i,V_j)\in E(T)$, we define 
$\lambda_l(i,j)$ as 
$\lambda_l(i,j):=\lambda_l$ if $l\neq l(i,j)$. In addition, we define $\lambda_{l(i,j)}(i,j)$ as 
$$\lambda_{l(i,j)}(i,j):=
\begin{array}{lll}
\tau_{i,j} v_{1,l(i,j)}v_{2,l(i,j)}\cdots v_{i-1,l(i,j)}v_{i+1,l(i,j)} & &\\
\ \ \cdots v_{j-1,l(i,j)}v_{j+1,l(i,j)}\cdots v_{k,l(i,j)}
\end{array}
$$
if $i<j$, and $\lambda_{l(i,j)}(i,j):=\lambda_{l(j,i)}(j,i)$ if $i>j$, 
where we note that $l(j,i)=l(i,j)$. 
Moreover, we define 
$\gamma(i,j)$ as 
\begin{align*}
\gamma(i,j):&=\lambda_1(i,j)\lambda_2(i,j)\cdots \lambda_{n}(i,j) \\
&=\lambda_{1}\lambda_{2}\cdots \lambda_{l(i,j)-1}\lambda_{l(i,j)}(i,j) \lambda_{l(i,j)+1}\cdots \lambda_{n}.
\end{align*}
Then, we have $\gamma(j,i)=\gamma(i,j)$. 

Take a closed path 
\begin{equation}\label{pathtree}
(V_{i_1},\dots,V_{i_r},V_{i_{r+1}})
\end{equation}
with $i_1=i_{r+1}=1$ 
on the spanning tree $T$ of $Q(\Gamma)$ 
passing through every vertex at least once. 
We define $\gamma$ as 
\begin{equation}\label{gammaelement}
\gamma:=\gamma(i_1,i_2)\gamma(i_2,i_3)\cdots\gamma(i_r,i_{r+1}).
\end{equation}
We set 
$$\gamma(0):=1, \gamma(1):=\lambda_1(i_1,i_2),\gamma(2):=\lambda_1(i_1,i_2)\lambda_2(i_1,i_2),\ldots,\gamma(n):=\gamma(i_1,i_2)$$ 
and for $a\in\{2,\ldots r\}$ and $l\in\{1,\ldots,n\}$, we set 
$$\gamma((a-1)n+l):=\gamma(i_1,i_2)\cdots\gamma(i_{a-1},i_a)(\lambda_1(i_a,i_{a+1})\cdots\lambda_l(i_a,i_{a+1})).$$
In particular, we have $\gamma(rn)=\gamma$. 

We confirm the following for convenience: 
\begin{enumerate}
\item $k$ is the number of indecomposable factors of a unique join decomposition of $\Gamma$; 
\item $n$ is the common length of the closed paths on $(\Gamma_i)^c$ for all $i\in\{1,\ldots,k\}$ taken in Lemma \ref{closed}; 
\item $r$ is the length of the closed path (\ref{pathtree}) on $T$ taken above. 
\end{enumerate}

For $1\leq l\le n$, set 
$$U_l:=\{v_{1,l},v_{2,l},\ldots, v_{k,l}\}.$$
Then, $U_l$ spans a clique in $\Gamma$, that is, $U_l\in \Delta_\Gamma$. 
Hence, we have a $k$-dimensional cube $[A_\emptyset, A_{U_l}]$ in $C_\Gamma$. 
The hyperplanes dual to edges of $[A_\emptyset,A_{U_l}]$ are of $v_{i,l}$-type ($i\in \{1,\ldots,k\}$).  
We denote such hyperplanes as $H_{i,l}$ ($i\in \{1,\ldots,k\}$). 

\begin{lemma}\label{key0}
For $i\in \{1,\ldots,k\}$, $l\in \{1,\ldots,n\}$, and $a\in\{1,\ldots,r\}$, we have the following: 
\begin{enumerate}
\item $H_{i,l}\cap H_{i,l+1}=\emptyset$, where we set $H_{i,n+1}:=H_{i,1}$; 
\item $H_{i,l}\cap \lambda_l H_{i,l}=\emptyset$; 
\item $H_{i,l(i_{a},i_{a+1})}\cap \lambda_{l(i_{a},i_{a+1})}(i_{a},i_{a+1})H_{i,l(i_{a},i_{a+1})}=\emptyset$; 
\item $[A_\emptyset,A_{U_l}]\cap [A_\emptyset,A_{U_{l+1}}]=\{A_\emptyset\}$, where we set $U_{n+1}:=U_{1}$;
\item $[A_\emptyset,A_{U_l}]\cap \lambda_l [A_\emptyset,A_{U_l}]=\{A_{U_l}\}$;
\item $[A_\emptyset,A_{U_{l(i_{a},i_{a+1})}}]\cap \lambda_{l(i_{a},i_{a+1})}(i_{a},i_{a+1})[A_\emptyset,A_{U_{l(i_{a},i_{a+1})}}]=\{A_{U_{l(i_{a},i_{a+1})}}\}$.
\end{enumerate}
\end{lemma}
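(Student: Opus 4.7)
The plan is to split the six claims into two groups: (1)--(3) assert disjointness of hyperplanes, and (4)--(6) assert that certain cubes meet in a single vertex. I would handle (1) and (4) directly. For (1), $(v_{i,l},v_{i,l+1})$ is an edge of $(\Gamma_i)^c$, hence not of $\Gamma$, so by Remark \ref{label} the hyperplanes $H_{i,l}$ and $H_{i,l+1}$ (of the distinct types $v_{i,l}\ne v_{i,l+1}$) are disjoint. For (4), since $v_{i,l}\ne v_{i,l+1}$ for each $i$ and the vertex sets $V_i$ of distinct join factors are pairwise disjoint, we have $U_l\cap U_{l+1}=\emptyset$. An equality $A_T=A_{T'}$ with $T\subset U_l$ and $T'\subset U_{l+1}$ forces $T=T'\subset U_l\cap U_{l+1}=\emptyset$, so $A_\emptyset$ is the only vertex shared by the two cubes; the intersection of two cubes in a CAT(0) cube complex being a common face determined by its vertices, this forces the intersection to be $\{A_\emptyset\}$.

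Next I would reduce the remaining claims (2), (3), (5), (6) to the single non-containment
\[
\gamma\notin A_{U_l\setminus\{v_{i,l}\}},\quad\text{where }\gamma\text{ is }\lambda_l\text{ (for (2),(5))}\text{ or }\lambda_{l(i_a,i_{a+1})}(i_a,i_{a+1})\text{ (for (3),(6))}.
\]
Granting this, (5) and (6) would follow by coset analysis as in (4): an equality $A_T=\gamma A_{T'}$ with $T,T'\subset U_l$ forces $T=T'$ and $\gamma\in A_T$, whence the non-containment forces $T=U_l$. For (2) and (3), both $H_{i,l}$ and $\gamma H_{i,l}$ are of type $v_{i,l}$, so by Remark \ref{label} they either coincide or are disjoint. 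Each is dual to an edge at the common vertex $A_{U_l}$, namely $[A_{U_l\setminus\{v_{i,l}\}},A_{U_l}]$ and $[\gamma A_{U_l\setminus\{v_{i,l}\}},A_{U_l}]$ respectively; the non-containment makes these two edges distinct. Invoking the standard fact that two distinct edges of the same type at a common vertex in a CAT(0) cube complex are dual to distinct hyperplanes (a consequence of the product structure $N(H)\cong H\times[0,1]$ of the carrier) would then force $H_{i,l}\ne\gamma H_{i,l}$ and hence disjointness.

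To prove the non-containment, I would project via $A_\Gamma\twoheadrightarrow W_\Gamma$: it suffices to show $\bar\gamma\notin W_{U_l\setminus\{v_{i,l}\}}$, equivalently that the support of $\bar\gamma$ in $W_\Gamma$ contains $v_{i,l}$. For $\bar\lambda_l=\bar v_{1,l}\cdots\bar v_{k,l}$, an induction using the exchange condition (otherwise the exchange lemma would produce a reduced expression for $\bar v_{1,l}\cdots\bar v_{k-1,l}$ ending in $\bar v_{k,l}$, forcing $\bar v_{k,l}$ into its support and contradicting that $\bar v_{k,l}\notin\{\bar v_{1,l},\ldots,\bar v_{k-1,l}\}$) shows that a product of distinct simple reflections is a reduced expression, so $\bar\lambda_l$ has support exactly $U_l$. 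For $\bar\lambda_l(j,j')=\bar\tau_{j,j'}\cdot\bar\pi$ with $\bar\pi$ the product of the remaining $\bar v_{i',l}$ ($i'\ne j,j'$): if $i\notin\{j,j'\}$, containment in $W_{U_l\setminus\{v_{i,l}\}}$ would force $\bar\pi\in W_{U_l\setminus\{v_{i,l}\}}$ (since $\bar\tau_{j,j'}\in W_{\{v_{j,l},v_{j',l}\}}$ already sits inside), contradicting the previous paragraph applied to $\bar\pi$; if $i\in\{j,j'\}$, say $i=j$, containment would force $\bar\tau_{j,j'}\in W_{U_l\setminus\{v_{j,l}\}}\cap W_{\{v_{j,l},v_{j',l}\}}=W_{\{v_{j',l}\}}$ by the intersection formula for standard parabolic subgroups, contradicting that $\bar\tau_{j,j'}$ (essentially the longest element of the dihedral $I_2(m_{j,j'})$, of length $\ge m_{j,j'}-1\ge 2$) has support $\{v_{j,l},v_{j',l}\}$.

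The main obstacle will be the case $i\in\{j,j'\}$ of the non-containment: it requires pinning down the support of the twist $\bar\tau_{j,j'}$ inside $W_{\{v_{j,l},v_{j',l}\}}$ and applying the intersection formula for standard parabolics. The remaining ingredients are careful bookkeeping with Remark \ref{label}, the carrier structure of hyperplanes, and coset identities in $A_\Gamma$.
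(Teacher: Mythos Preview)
Your proposal is correct and the overall structure matches the paper's: both reduce (2) and (3) to showing that the edges $[A_{U_l\setminus\{v_{i,l}\}},A_{U_l}]$ and $[\gamma A_{U_l\setminus\{v_{i,l}\}},A_{U_l}]$ are distinct (equivalently, $\gamma\notin A_{U_l\setminus\{v_{i,l}\}}$), and then invoke the carrier structure to conclude the hyperplanes differ. The difference lies in how the non-containment is established. For (2) the paper is more elementary than your Coxeter-support argument: since every $v_{j,l}$ with $j\neq i$ already lies in $A_{U_l\setminus\{v_{i,l}\}}$, the containment $\lambda_l\in A_{U_l\setminus\{v_{i,l}\}}$ collapses to $v_{i,l}\in A_{U_l\setminus\{v_{i,l}\}}$, which van der Lek's theorem rules out directly---no exchange lemma needed. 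For (3) the paper instead invokes Lemma~\ref{twist}, whose inductive proof ultimately also passes through the dihedral Coxeter quotient; your route via the parabolic intersection formula $W_{U_l\setminus\{v_{i,l}\}}\cap W_{\{v_{j,l},v_{j',l}\}}=W_{\{v_{j',l}\}}$ together with the support computation for $\bar\tau_{j,j'}$ is a legitimate shortcut that bypasses the induction, though it does not yield the full strength of Lemma~\ref{twist} (which is needed later for Lemma~\ref{key}). For (4)--(6) the paper simply records that they follow from (1)--(3), leaving the cube-intersection argument implicit; your explicit coset analysis (noting $U_l\cap U_{l+1}=\emptyset$ for (4), and that any proper $T\subsetneq U_l$ misses some $v_{i,l}$ for (5)--(6)) is exactly what that deduction amounts to.
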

\begin{proof}
(1) $H_{i,l}$ and $H_{i,l+1}$ are of $v_{i,l}$-type and $v_{i,l+1}$-type, respectively. 
Note that $v_{i,l}\neq v_{i,l+1}$ implies $H_{i,l}\neq H_{i,l+1}$. 
Because  $(v_{i,l}, v_{i,l+1})\notin E(\Gamma)$, we have $\{v_{i,l}, v_{i,l+1}\}\notin \Delta_\Gamma$, 
and thus $H_{i,l}\cap H_{i,l+1}=\emptyset$. 

(2) Because $[A_{U_l\setminus\{v_{i,l}\}},A_{U_l}]\subset N(H_{i,l})$, 
we have $[\lambda_l A_{U_l\setminus\{v_{i,l}\}},\lambda_l A_{U_l}]\subset \lambda_l N(H_{i,l})$. 
Note that $\lambda_l A_{U_l}=A_{U_l}$ and $\lambda_l A_{U_l\setminus\{v_{i,l}\}}=v_{1,l}\cdots v_{i,l} A_{U_l\setminus\{v_{i,l}\}}$. 
Now, assume that $H_{i,l}\cap \lambda_l H_{i,l}\neq \emptyset$. 
Because $H_{i,l}$ and $\lambda_l H_{i,l}$ are of $v_{i,l}$-type, we see that $H_{i,l}= \lambda_l H_{i,l}$ (see Remark \ref{label}). 
Then, we have 
$$[\lambda_l A_{U_l\setminus\{v_{i,l}\}},\lambda_l A_{U_l}]=[v_{1,l}\cdots v_{i,l} A_{U_l\setminus\{v_{i,l}\}},A_{U_l}] \subset N(H_{i,l}).$$
Hence, we obtain $A_{U_l\setminus\{v_{i,l}\}}=v_{1,l}\cdots v_{i,l} A_{U_l\setminus\{v_{i,l}\}}$. 
This means that $v_{i,l}\in A_{U_l\setminus\{v_{i,l}\}}$. 
However, from \cite{van}, $\{v_{i,l}\}\cap (U_l\setminus\{v_{i,l}\})=\emptyset$ implies that 
$A_{\{v_{i,l}\}}\cap A_{U_l\setminus\{v_{i,l}\}}=A_\emptyset=\{1\}$. 
This contradicts $v_{i,l}\neq 1$ in $A_\Gamma$.

(3) When $i\neq i_a, i_{a+1}$, by the same argument as in (2), we see that 
$$H_{i,l(i_{a},i_{a+1})}\cap \lambda_{l(i_{a},i_{a+1})}(i_{a},i_{a+1})H_{i,l(i_{a},i_{a+1})}=\emptyset.$$
Now, assume that $i=i_a$ or $i=i_{a+1}$ and
$$H_{i,l(i_a,i_{a+1})}\cap \lambda_{l(i_{a},i_{a+1})}(i_{a},i_{a+1})H_{i,l(i_{a},i_{a+1})}\neq \emptyset.$$
Note that 
$\lambda_{l(i_{a},i_{a+1})}(i_{a},i_{a+1})A_{U_{l(i_{a},i_{a+1})}}=A_{U_{l(i_{a},i_{a+1})}}$ 
and
$$\lambda_{l(i_{a},i_{a+1})}(i_{a},i_{a+1})A_{U_{l(i_{a},i_{a+1})}\setminus\{v_{i,l(i_{a},i_{a+1})}\}}=\tau_{i_a,i_{a+1}} A_{U_{l(i_{a},i_{a+1})}\setminus\{v_{i,l(i_{a},i_{a+1})}\}}.$$
Because $H_{i, l(i_{a},i_{a+1})}$ and $\lambda_{l(i_{a},i_{a+1})} H_{i,l(i_{a},i_{a+1})}$ 
are of $v_{i,l(i_{a},i_{a+1})}$-type, we see that $H_{i,l(i_{a},i_{a+1})}= \lambda_{l(i_{a},i_{a+1})} H_{i,\lambda_{l(i_{a},i_{a+1})}}$. 
Then, we have 
\begin{equation*}
\begin{split}
[\lambda_{l(i_{a},i_{a+1})} A_{U_{l(i_{a},i_{a+1})}\setminus\{v_{i,l(i_{a},i_{a+1})}\}},\lambda_{l(i_{a},i_{a+1})} A_{U_{l(i_{a},i_{a+1})}}]\\
=[\tau_{i_a,i_{a+1}} A_{U_{l(i_{a},i_{a+1})}\setminus\{v_{i,{l(i_{a},i_{a+1})}}\}},A_{U_{l(i_{a},i_{a+1})}}] \subset N(H_{i,{l(i_{a},i_{a+1})}}).
\end{split}
\end{equation*}
Hence, we have 
$$\tau_{i_a,i_{a+1}}A_{U_{l(i_a,i_{a+1})}\setminus\{v_{i,l(i_a,i_{a+1})}\}}=A_{U_{l(i_a,i_{a+1})}\setminus\{v_{i,l(i_a,i_{a+1})}\}}.$$
This contradicts Lemma \ref{twist}.

Parts (4), (5), and (6) follow from (1), (2), and (3), respectively.
\end{proof}

The following is a key lemma.
\begin{lemma}\label{key}
For $a\in\{1,\ldots,r\}$, we have 
$$H_{i_a,l(i_a,i_{a+1})}\cap \lambda_{l(i_a,i_{a+1})}(i_a,i_{a+1})H_{i_{a+1},l(i_a,i_{a+1})}=\emptyset.$$
\end{lemma}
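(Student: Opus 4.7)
The plan is to argue by contradiction using Lemma \ref{twist}. Fix $a\in\{1,\ldots,r\}$, set $l:=l(i_a,i_{a+1})$, $U:=U_l$, $s:=v_{i_a,l}$, $t:=v_{i_{a+1},l}$, and suppose first that $i_a<i_{a+1}$, so that $s=s_{i_a,i_{a+1}}$, $t=t_{i_a,i_{a+1}}$, and $m_{i_a,i_{a+1}}>2$. Write $\tau:=\tau_{i_a,i_{a+1}}$. The case $i_a>i_{a+1}$ is entirely symmetric, handled by the second clause of Lemma \ref{twist}.

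First I would pin down the specific edges at $A_U$ that are dual to the two hyperplanes. By definition $H_{i_a,l}$ is dual to $[A_{U\setminus\{s\}},A_U]$. For the translated hyperplane, I would use the factorisation $\lambda_l(i_a,i_{a+1})=\tau\cdot w$, where $w:=v_{1,l}\cdots v_{i_a-1,l}v_{i_a+1,l}\cdots v_{i_{a+1}-1,l}v_{i_{a+1}+1,l}\cdots v_{k,l}$. Since every factor of $w$ lies in $U\setminus\{s,t\}$, we have $w\in A_{U\setminus\{s,t\}}\subset A_{U\setminus\{t\}}$, so that $\lambda_l(i_a,i_{a+1})A_U=A_U$ and $\lambda_l(i_a,i_{a+1})A_{U\setminus\{t\}}=\tau A_{U\setminus\{t\}}$. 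Therefore $\lambda_l(i_a,i_{a+1})H_{i_{a+1},l}$ is the hyperplane of $t$-type dual to the edge $[\tau A_{U\setminus\{t\}},A_U]$.

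Next, suppose for contradiction that $H_{i_a,l}\cap\lambda_l(i_a,i_{a+1})H_{i_{a+1},l}\neq\emptyset$. I would invoke the standard fact that in a CAT(0) cube complex, if two hyperplanes cross and a vertex has incident edges dual to each of them, then those two edges span a square at that vertex; this follows from the product decomposition of a hyperplane carrier $N(H)\cong H\times[0,1]$, in which any edge at a vertex of $N(H)$ dual to a second crossing hyperplane must be ``horizontal'' and hence bounds a square together with the ``vertical'' edge dual to $H$. Applied at the vertex $A_U$, this yields a square in $C_\Gamma$ containing both edges $[A_U,A_{U\setminus\{s\}}]$ and $[A_U,\tau A_{U\setminus\{t\}}]$, in direct contradiction to Lemma \ref{twist}.

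I do not anticipate any serious obstacle; the main work is the bookkeeping ensuring that $w$ lies in $A_{U\setminus\{t\}}$ (so that it is absorbed into the coset on the right) and that the word $\tau$ arising here is exactly the one appearing in the statement of Lemma \ref{twist}. Both are immediate from the definitions of $\lambda_l(i_a,i_{a+1})$ and $\tau_{i_a,i_{a+1}}$.
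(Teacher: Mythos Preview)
Your proposal is correct and follows essentially the same route as the paper: identify the edges at $A_{U_l}$ dual to the two hyperplanes (showing in particular that the $\lambda_l(i_a,i_{a+1})$-translate of $[A_{U\setminus\{t\}},A_U]$ is $[\tau A_{U\setminus\{t\}},A_U]$), then use the CAT(0) property at the common vertex $A_{U_l}$ to force these two edges to span a square if the hyperplanes met, contradicting Lemma~\ref{twist}. Your write-up is in fact slightly more explicit than the paper's in two places---the factorisation $\lambda_l(i_a,i_{a+1})=\tau\cdot w$ with $w\in A_{U\setminus\{s,t\}}$, and the case split $i_a<i_{a+1}$ versus $i_a>i_{a+1}$ matching the two clauses of Lemma~\ref{twist}---but the underlying argument is the same.
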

\begin{proof}
$H_{i_a,l(i_a,i_{a+1})}$ and $\lambda_{l(i_a,i_{a+1})}(i_a,i_{a+1})H_{i_{a+1},l(i_a,i_{a+1})}$ 
are of $v_{i_a,l(i_a,i_{a+1})}$-type and $v_{i_{a+1},l(i_a,i_{a+1})}$-type, respectively. 
Note that $v_{i_a,l(i_a,i_{a+1})}\neq v_{i_{a+1},l(i_a,i_{a+1})}$ implies 
$H_{i_a,l(i_a,i_{a+1})}\neq \lambda_{l(i_a,i_{a+1})}(i_a,i_{a+1})H_{i_{a+1},l(i_a,i_{a+1})}$. 
Additionally, $$\lambda_{l(i_a,i_{a+1})}(i_a,i_{a+1})A_{U_{l(i_a,i_{a+1})}}=A_{U_{l(i_a,i_{a+1})}} \text{ and}$$
$$\lambda_{l(i_a,i_{a+1})}(i_a,i_{a+1})A_{U_{l(i_a,i_{a+1})}\setminus\{v_{i_{a+1},l(i_a,i_{a+1})}\}}
=\tau_{l(i_a,i_{a+1})}A_{U_{l(i_a,i_{a+1})}\setminus\{v_{i_{a+1},l(i_a,i_{a+1})}\}}.$$ 
Consider 
$$[A_{U_{l(i_a,i_{a+1})}\setminus\{v_{i_a,l(i_a,i_{a+1})}\}}, A_{U_{l(i_a,i_{a+1})}}]
\subset N(H_{i_a,l(i_a,i_{a+1})}) \text{ and}$$
$$[\tau_{i_a,i_{a+1}}(i_a,i_{a+1})A_{U_{l(i_a,i_{a+1})}\setminus\{v_{i_{a+1},l(i_a,i_{a+1})}\}}, A_{U_{l(i_a,i_{a+1})}}]
\subset \lambda_{l(i_a,i_{a+1})}(i_a,i_{a+1})N(H_{i_{a+1},l(i_a,i_{a+1})}).$$
Now, assume that 
$$H_{i_a,l(i_a,i_{a+1})}\cap \lambda_{l(i_a,i_{a+1})}(i_a,i_{a+1})H_{i_{a+1},l(i_a,i_{a+1})}\neq \emptyset.$$
Because $$A_{U_{l(i_a,i_{a+1})}}\in N(H_{i_a,l(i_a,i_{a+1})})\cap \lambda_{l(i_a,i_{a+1})}(i_a,i_{a+1})N(H_{i_{a+1},l(i_a,i_{a+1})}),$$ 
the two edges $$[A_{U_{l(i_a,i_{a+1})}\setminus\{v_{i_a,l(i_a,i_{a+1})}\}}, A_{U_{l(i_a,i_{a+1})}}] \text{ and}$$ 
$$[\tau_{i_a,i_{a+1}}(i_a,i_{a+1})A_{U_{l(i_a,i_{a+1})}\setminus\{v_{i_{a+1},l(i_a,i_{a+1})}\}}, A_{U_{l(i_a,i_{a+1})}}]$$
must span a square. 
This contradicts Lemma \ref{twist}.
\end{proof}

Noting Lemma \ref{key0}, for any $i\in\{1,\ldots,k\}$, we define a sequence of hyperplanes 
$$\ldots,J_{i,-1}, J_{i,0}, J_{i,1},\ldots,J_{i,2rn}, J_{i,2rn+1}, \ldots,$$ 
a sequence of $k$-dimensional cubes 
$$\ldots,K_{-1},K_0,K_1,\cdots,K_{2rn},K_{2rn+1},\ldots,$$
and a sequence of vertices of $C_\Gamma$ 
$$\ldots,w_{-1},w_0,w_1,\ldots,w_{2rn},w_{2rn+1},\ldots$$ as follows. 
First, for $a\in\{1,\ldots,r\}$ and $l\in\{1,\dots,n\}$, we define 
\begin{align*}
J_{i,2((a-1)n+l)-1}&:=\gamma((a-1)n+l-1)H_{i,l},\\
J_{i,2((a-1)n+l)}&:=\gamma((a-1)n+l)H_{i,l},\\
K_{2((a-1)n+l)-1}&:=\gamma((a-1)n+l-1)[A_\emptyset,A_{U_l}],\\
K_{2((a-1)n+l)}&:=\gamma((a-1)n+l)[A_\emptyset,A_{U_l}],\\
w_{2((a-1)n+l)-1}&:=\gamma((a-1)n+l-1)A_{U_l},\\
w_{2((a-1)n+l)}&:=\gamma((a-1)n+l)A_\emptyset,\\
\end{align*}
where we note that both $J_{i,2((a-1)n+l)-1}$ and $J_{i,2((a-1)n+l)}$ are of the same $v_{i,l}$-type.

Second, for any $b\in\{1,\dots,2rn\}$ and $c\in \mathbb{Z}$, we set 
$$J_{i,2rnc+b}:=\gamma^cJ_{i,b}, K_{2rnc+b}:=\gamma^cK_{b}, w_{2rnc+b}:=\gamma^cw_{b}.$$
Then, we have $$K_{2rnc+b}=[w_{2rnc+b-1}, w_{2rnc+b}].$$
Additionally, we have the two connected components 
$J_{i,2rnc+b}^-$ and $J_{i,2rnc+b}^+$ such that 
$$C_\Gamma \!\setminus\!\!\setminus J_{i,2rnc+b}=J_{i,2rnc+b}^- \sqcup J_{i,2rnc+b}^+,$$ 
$$w_{2rnc+b-1}\in J_{i,2rnc+b}^-, w_{2rnc+b}\in J_{i,2rnc+b}^+.$$
Then, Lemma \ref{key0} implies that, for any $i\in\{1,\ldots,k\}$, 
$$\cdots\subsetneq J_{i,-1}^-\subsetneq  J_{i,0}^-\subsetneq J_{i,1}^-\subsetneq \cdots \subsetneq J_{i,2rn}^-\subsetneq J_{i,2rn+1}^-\subsetneq \cdots ,$$
$$\cdots\supsetneq J_{i,-1}^+\supsetneq J_{i,0}^+\supsetneq J_{i,1}^+\supsetneq \cdots \supsetneq J_{i,2rn}^+\supsetneq J_{i,2rn+1}^+\supsetneq \cdots.$$
Note that 
$$J_{i,0}^-\not\ni w_0=A_{\emptyset}\in J_{i,1}^-,$$
$$J_{i,2rn}^+\ni w_{2rn}=\gamma A_\emptyset \not\in J_{i,2rn+1}^+.$$

Let $\ell$ be a path from  $w_0=A_\emptyset$ to $w_{2rn}=\gamma A_\emptyset$ 
that diagonally penetrates each of the cubes $K_{1},\ldots,K_{2rn}$ in order. 
Then, the set of all hyperplanes intersecting the path $\ell$ is 
$\{J_{i,d}\}_{i\in\{1,\ldots,k\}, d\in\{1,\ldots,2rn\}}$. 
Hence, we have the following. 
\begin{lemma}\label{key3}
The set 
$\{J_{i,d}\}_{i\in\{1,\ldots,k\}, d\in\{1,\ldots,2rn\}}$
is the set of all hyperplanes separating $w_0=A_\emptyset$ and $w_{2rn}=\gamma A_\emptyset$. 
\end{lemma}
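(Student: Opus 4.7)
My plan to prove Lemma \ref{key3} is to apply the standard correspondence in a CAT(0) cube complex between combinatorial geodesics and sets of separating hyperplanes: if a combinatorial edge-path of length $L$ from $x$ to $y$ crosses $L$ pairwise distinct hyperplanes, then it is a geodesic, and the hyperplanes it crosses are exactly those separating $x$ and $y$. The candidate edge-path will be obtained from the diagonal path $\ell$ by replacing, within each $k$-cube $K_b=[w_{b-1},w_b]$, the diagonal by any length-$k$ edge-path from $w_{b-1}$ to $w_b$ along the edges of $K_b$. Denote this concatenated combinatorial edge-path by $\tilde\ell$; it has length $2rnk$, and the family of hyperplanes it crosses (with multiplicity) is precisely $\{J_{i,d}\}_{i\in\{1,\ldots,k\},\,d\in\{1,\ldots,2rn\}}$, since the hyperplanes transverse to any edge-path inside a single cube are exactly those transverse to the cube.

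The heart of the argument is to verify that these $2rnk$ hyperplanes are pairwise distinct. For a fixed index $i$, the strict inclusion chain
$$\cdots \subsetneq J_{i,d-1}^- \subsetneq J_{i,d}^- \subsetneq J_{i,d+1}^- \subsetneq \cdots$$
established from Lemma \ref{key0} forces the $J_{i,d}$ to be pairwise distinct as $d$ varies. For $i\neq i'$, every $J_{i,d}$ is of $v_{i,l}$-type with $v_{i,l}\in V_i$, while every $J_{i',d'}$ is of $v_{i',l'}$-type with $v_{i',l'}\in V_{i'}$; since $V_i\cap V_{i'}=\emptyset$ for indices of distinct join factors, the types differ, and Remark \ref{label} yields $J_{i,d}\neq J_{i',d'}$.

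With pairwise distinctness in hand, $\tilde\ell$ is a combinatorial edge-path of length $2rnk$ crossing $2rnk$ distinct hyperplanes, hence crossing each exactly once. The standard characterization then makes $\tilde\ell$ a combinatorial geodesic from $w_0=A_\emptyset$ to $w_{2rn}=\gamma A_\emptyset$, and the set of hyperplanes separating these two vertices coincides with the set of hyperplanes crossed by $\tilde\ell$, which is $\{J_{i,d}\}$. This will prove the lemma.

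I do not anticipate any substantial obstacle. The only point requiring care is the bookkeeping for pairwise distinctness across different indices $i$, but this is immediate from the disjointness of the vertex sets $V_i$ of distinct join factors together with the unique typing of hyperplanes recorded in Remark \ref{label}; the chain of strict half-space inclusions, itself a direct consequence of Lemma \ref{key0}, takes care of distinctness for fixed $i$.
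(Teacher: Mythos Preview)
Your argument is correct and follows essentially the same route as the paper: the paper simply observes that the diagonal path $\ell$ through the cubes $K_1,\ldots,K_{2rn}$ meets precisely the hyperplanes $\{J_{i,d}\}$, and (combined with the strict half-space chain established from Lemma~\ref{key0} just before the lemma) these are exactly the hyperplanes separating $w_0$ and $w_{2rn}$. Your combinatorial version via an edge-path and the geodesic characterization is a clean and more explicit way to make the same point, including the pairwise-distinctness verification that the paper leaves implicit.
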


We now state the final lemma required for the proof of Theorem \ref{main}, 
where we recall Definition \ref{separatingdef} and Remark \ref{separatingrem}. 
\begin{lemma}\label{key4}
\begin{enumerate}
\item For any $i\in\{1,\ldots,k\}$, 
the sequence of hyperplanes $$J_{i,1},\ldots,J_{i,2rn}$$ 
is a sequence of separating hyperplanes 
from $J_{i,0}$ to $J_{i,2rn+1}$, where $J_{i,0}$ and $J_{i,2rn+1}$ 
are of $v_{i,n}$-type and $v_{i,1}$-type, respectively 
(in particular, from $w_0=A_\emptyset$ to $w_{2rn}=\gamma A_\emptyset$). 
\item 
The sequence of hyperplanes 
\begin{align*}
&J_{i_1,1},J_{i_1,2},\ldots,J_{i_1,2l(i_1,i_{2})-1},\\
&J_{i_{2},2l(i_1,i_{2})},J_{i_{2},2l(i_1,i_{2})+1},
\ldots,J_{i_{2},2n},\\
&J_{i_{2},2n+1},J_{i_{2},2(n+1)},
\ldots,J_{i_{2},2(n+l(i_{2},i_{3}))-1},\\
&\cdots \\
&J_{i_{r},2((r-2)n+l(i_{r-1},i_{r}))},J_{i_{r},2((r-2)n+l(i_{r-1},i_{r}))+1},
\ldots,J_{i_{r},2(r-1)n},\\
&J_{i_{r},2(r-1)n+1},J_{i_{r},2((r-1)n+1)},
\ldots,J_{i_{r},2((r-1)n+l(i_r,i_{r+1}))-1},\\
&J_{i_{r+1},2((r-1)n+l(i_r,i_{r+1}))},J_{i_{r+1},2((r-1)n+l(i_r,i_{r+1}))+1},
\ldots,J_{i_{r+1},2rn}
\end{align*}
is a sequence of separating hyperplanes 
from $J_{1,0}$ to $J_{1,2rn+1}$, where $J_{1,0}$ and $J_{1,2rn+1}$ 
are of $v_{1,n}$-type and $v_{1,1}$-type, respectively 
(in particular, from $w_0=A_\emptyset$ to $w_{2rn}=\gamma A_\emptyset$). 

Moreover, the sequence contains a hyperplane of $v_i$-type for any $i\in\{1,\ldots,k\}$ and $v_i\in V(\Gamma_i)$. 
\end{enumerate}
\end{lemma}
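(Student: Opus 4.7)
Both parts of Lemma~\ref{key4} will follow by combining the strictly decreasing chain $J_{i,b}^{+}\supsetneq J_{i,b+1}^{+}$ already established just before Lemma~\ref{key3} (a direct consequence of Lemma~\ref{key0}) with the disjointness at switching points supplied by Lemma~\ref{key}, using the vertices $w_b$ to pin down orientations.

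Part~(1) is essentially a rereading of what has been set up. The chain $J_{i,0}^{+}\supsetneq J_{i,1}^{+}\supsetneq\cdots\supsetneq J_{i,2rn+1}^{+}$ is exactly what Definition~\ref{separatingdef} asks for, so $J_{i,1},\ldots,J_{i,2rn}$ is a sequence of separating hyperplanes from $J_{i,0}$ to $J_{i,2rn+1}$. The type labels are immediate from the definitions: $J_{i,0}=\gamma^{-1}J_{i,2rn}=H_{i,n}$ is of $v_{i,n}$-type and $J_{i,2rn+1}=\gamma H_{i,1}$ is of $v_{i,1}$-type. The edges $[A_\emptyset,A_{\{v_{i,n}\}}]$ and $\gamma[A_\emptyset,A_{\{v_{i,1}\}}]$ witness $w_0\in N(J_{i,0})$ and $w_{2rn}\in N(J_{i,2rn+1})$, so Remark~\ref{separatingrem} delivers the ``in particular'' clause.

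For Part~(2) the interleaved sequence has three kinds of consecutive pairs. (a)~Within a ``first part'' or ``second part'' of a single block, consecutive hyperplanes share the outer index $i_a$ or $i_{a+1}$, so the strict inclusion is a step of the Part~(1) chain. (b)~Between two consecutive blocks (passing from $J_{i_{a+1},2an}$ to $J_{i_{a+1},2an+1}$) the index also matches, so this too is a Part~(1) step. (c)~The genuinely new case is the switch inside block $a$ from $J_{i_a,b^{*}-1}$ to $J_{i_{a+1},b^{*}}$ with $b^{*}:=2((a-1)n+l(i_a,i_{a+1}))$; after left translation by $\gamma((a-1)n+l(i_a,i_{a+1})-1)^{-1}$ these become $H_{i_a,l(i_a,i_{a+1})}$ and $\lambda_{l(i_a,i_{a+1})}(i_a,i_{a+1})H_{i_{a+1},l(i_a,i_{a+1})}$, which are disjoint by Lemma~\ref{key}. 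The common vertex $w_{b^{*}-1}$ lies in $J_{i_a,b^{*}-1}^{+}\cap J_{i_{a+1},b^{*}}^{-}$ by the fixed conventions, and since disjoint hyperplanes each sit entirely in one halfspace of the other, this orientation datum forces $J_{i_a,b^{*}-1}^{+}\supsetneq J_{i_{a+1},b^{*}}^{+}$. The endpoint types and the ``in particular'' clause are handled exactly as in Part~(1).

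The moreover clause rests on the fact that a closed walk on a tree visiting every vertex must traverse every edge at least twice, once in each direction, by an elementary parity argument (every edge is a cut edge, so net flux across it is zero). Fix $i\in\{1,\ldots,k\}$ and any incident edge $(V_i,V_j)$ of $T$. The outbound traversal $V_i\to V_j$ at some position $a$ makes block $a$'s first part contribute hyperplanes of $v_{i,l}$-type for $l\in\{1,\ldots,l(i,j)\}$, while the inbound traversal $V_j\to V_i$ at some position $a'$ makes block $a'$'s second part contribute $v_{i,l}$-type hyperplanes for $l\in\{l(i,j),\ldots,n\}$. Their union is $\{1,\ldots,n\}$, so every $v_{i,l}$-type appears; since the path $(v_{i,1},\ldots,v_{i,n+1})$ on $(\Gamma_i)^c$ visits every vertex, every $v_i\in V(\Gamma_i)$ is realized. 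I expect the main obstacle to be fixing the correct orientation at the switch in Part~(2); once that is handled, everything else is careful index-bookkeeping against the chain and the disjointness lemmas.
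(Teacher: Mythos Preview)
Your proposal is correct and follows essentially the same route as the paper: Part~(1) is read off the chain coming from Lemma~\ref{key0}; Part~(2) is split into within-block steps (handled by that same chain) and the switch step (handled by Lemma~\ref{key}); and the ``moreover'' clause is obtained from the tree parity argument that a closed walk on $T$ visiting every vertex traverses every edge in both directions, which is exactly what the paper does (splitting into the cases $i=1$ and $i\neq 1$ only to name a specific adjacent $V_j$).

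One spot to tighten in Part~(2)(c): the single datum $w_{b^{*}-1}\in J_{i_a,b^{*}-1}^{+}\cap J_{i_{a+1},b^{*}}^{-}$ together with disjointness does not by itself force the desired nesting; three of the four halfspace configurations for a pair of disjoint hyperplanes remain compatible with that one point. You should also note, for example, $w_{2rn}\in J_{i_a,b^{*}-1}^{+}\cap J_{i_{a+1},b^{*}}^{+}$ and $w_{0}\in J_{i_a,b^{*}-1}^{-}\cap J_{i_{a+1},b^{*}}^{-}$, both immediate from the Part~(1) chains for the indices $i_a$ and $i_{a+1}$; with three nonempty ``quadrants'' the empty one must be $J_{i_a,b^{*}-1}^{-}\cap J_{i_{a+1},b^{*}}^{+}$, giving $J_{i_a,b^{*}-1}^{+}\supsetneq J_{i_{a+1},b^{*}}^{+}$. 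The paper is equally terse at this step, so your overall level of detail is fine.
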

\begin{proof}
(1) The assertion is clear from parts (1) and (2) of Lemma \ref{key0}. 

(2) 
Parts (1), (2), and (3) of Lemma \ref{key0} and Lemma \ref{key} imply that 
the sequence specified in the assertion is a sequence of separating hyperplanes 
from $J_{i_1,0}$ to $J_{i_{r+1},2rn+1}$. 
Note that $i_1=1$ and $i_{r+1}=1$ by definition. 

We show that the sequence contains 
a hyperplane of $v_i$-type for any $i\in\{1,\ldots,k\}$ and $v_i\in V(\Gamma_i)$. 
Recall that $(V_{i_1},\dots,V_{i_r},V_{i_{r+1}})$ (\ref{pathtree}) is a closed path on the tree $T$. 
Thus, if an edge $(V_i,V_j)\in E(T)$ is contained in the closed path, then so is the inverse edge $(V_j,V_i)$. 
In addition, because $T$ is a spanning tree of $Q(\Gamma)$ and 
the closed path $(V_{i_1},\dots,V_{i_r},V_{i_{r+1}})$ passes through every vertex at least once, 
any $i\in \{1,\ldots ,k\}$, $V_i$ is contained in the closed path as a vertex. 
Additionally, recall that $T$ has the root $V_1$ 
and that $i<j$ only if $V_i$ is not farther than $V_j$ from $V_1$ in $T$. 

Now, take any $i\in\{1,\ldots,k\}$. 
We consider the two cases of $i=1$ and $i\neq 1$. 

First, suppose that $i=1$. 
Note that $i_1=1$ and set $j=i_2$. 
Take $a\in \{1,2,\ldots r\}$ such that $a\neq 1$, $i_a=1$, and $i_{a-1}=j$. 
Then, the sequence in the assertion contains the two subsequences 
\begin{align*}
J_{i_1,1},J_{i_1,2},\ldots,J_{i_1,2l(i_1,i_{2})-1},
\end{align*}
which are of $v_{1,1}$-type, $v_{1,1}$-type, $\ldots$, 
$v_{1,l(1,j)}$-type, and 
\begin{align*}
J_{i_{a},2((a-2)n+l(i_{a-1},i_{a}))},J_{i_{a},2((a-2)n+l(i_{a-1},i_{a}))+1},
\ldots,J_{i_{a},2(a-1)n},
\end{align*}
which are of $v_{1,l(j,1)}$-type, $v_{1,l(j,1)+1}$-type, $\ldots$, 
$v_{1,n}$-type. 
Note that $l(j,1)=l(1,j)$ by Lemma \ref{closed}. 
Take any vertex $v_1\in V(\Gamma_1)$. 
Then, there exists some $l\in\{1,\ldots,n\}$ such that $v_{1,l}=v_1$ by Lemma \ref{closed}. 
Hence, the sequence  in the assertion contains a hyperplane of $v_1$-type. 

Next, suppose that $i\neq 1$.  
Take the smallest $a\in \{1,\ldots r\}$ such that $i_a=i$. 
Because $i_1=1$ and $i\neq 1$, we have $a\neq 1$. 
We set $j=i_{a-1}$. 
Then, we have $a'\in \{1,\ldots r\}$ such that 
$a\le a'$, $i_{a'}=i$, and $i_{a'+1}=j$. 
The sequence contains the two subsequences 
\begin{align*}
J_{i_{a},2((a-2)n+l(i_{a-1},i_{a}))},J_{i_{a},2((a-2)n+l(i_{a-1},i_{a}))+1},
\ldots,J_{i_{a},2(a-1)n},
\end{align*}
which are of $v_{i,l(j,i)}$-type, $v_{i,l(j,i)+1}$-type, $\ldots$, 
$v_{i,n}$-type, and 
\begin{align*}
J_{i_{a'},2(a'-1)n+1},J_{i_{a'},2((a'-1)n+1)},
\ldots,J_{i_{a'},2((a'-1)n+l(i_{a'},i_{a'+1}))-1},
\end{align*}
which are of $v_{i,1}$-type, $v_{i,1}$-type, $\ldots$, 
$v_{i,l(j,i)}$-type. 
Note that $l(j,i)=l(i,j)$ by Lemma \ref{closed}. 
Take any vertex $v_i\in V(\Gamma_i)$. 
Then, there exists some $l\in\{1,\ldots,n\}$ such that $v_{i,l}=v_i$ by Lemma \ref{closed}. 
Hence, the sequence in the assertion contains a hyperplane of $v_i$-type. 
\end{proof}

\begin{figure}
\begin{center}
\includegraphics[width=12cm,pagebox=cropbox,clip]{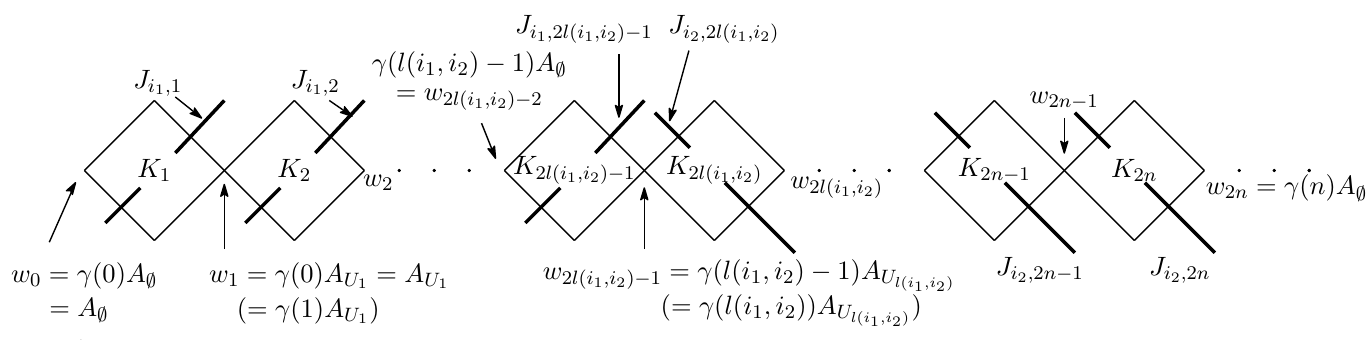}
\caption{Part of a sequence of hyperplanes 
for the case where $\Gamma=\Gamma_1\ast\Gamma_2$.}\label{sequence_fig}
\end{center}
\end{figure}

We can now complete the proof of Theorem \ref{main}. 
\begin{proof}[Proof of $(1)\Rightarrow(2)$ in Theorem \ref{main}]
We show $(1)\Rightarrow(2)$ in Theorem \ref{main}. 
Consider $\gamma\in A_\Gamma$ defined by (\ref{gammaelement}) 
and hyperplanes 
$$J:=J_{1,0} \text{ and } J':=J_{1,2rn+1},$$ 
which are of $v_{1,n}$-type and $v_{1,1}$-type, respectively. 
We will confirm conditions (i), (ii), and (iii) in Theorem \ref{criterion}. 

(i) $\gamma$ skewers $(J,J')$. Indeed, it is clear that 
$$J^+\supsetneq (\gamma^{-1}(J'^+)\supsetneq\gamma(J^+)\supsetneq) J'^+\supsetneq \gamma^2(J^+).$$

(ii) We show that $J$ and $J'$ are strongly separated. 
Take any hyperplane $H$ with $J\cap H\neq \emptyset$. 
When $H$ is of $v_i$-type for some $i$ and $v_i\in V(\Gamma_i)$, 
take a hyperplane $H'$ of $v_i$-type 
separating $A_\emptyset$ and $\gamma A_\emptyset$ 
such that $J\cap H'=\emptyset$ by part (2) of Lemma \ref{key4}. 
Then, $H\cap H'=\emptyset$ by Remark \ref{label}. 
Because $J\subset H'^-, H'^+\supset J'$,  $J\cap H\neq \emptyset$ and $H\cap H'=\emptyset$, we have 
$J'\cap H=\emptyset$. 

(iii) We show $Stab(J)\cap Stab(J')=\{1\}$. 
Note that for any $i\in\{1,\ldots,k\}$ and any $v_i\in V(\Gamma_i)$, 
we have at least one sequence of separating hyperplanes 
$P'_1,\ldots,P'_{M'}$ from $A_\emptyset$ to $\gamma A_\emptyset$ 
such that $P'_1$ is of $v_i$-type and $P'_{M'}$ is of $v_{1,n}$-type. 
For example, we can take such a sequence 
by considering a subsequence of 
the sequence in part (2) of Lemma \ref{key4}. 
For any $i\in\{1,\ldots,k\}$ and any $v_i\in V(\Gamma_i)$, 
we can take a longest sequence of separating hyperplanes 
$P_1,\ldots,P_M$ from $A_\emptyset$ to $\gamma A_\emptyset$ 
such that $P_1$ is of $v_i$-type and $P_M$ is of $v_{1,n}$-type, 
where 
$$A_\emptyset \in P_1^-, P_1^+\supsetneq P_2^+\supsetneq\cdots 
\supsetneq P_{M-1}^+\supsetneq P_M^+\ni \gamma A_\emptyset$$ 
by taking decompositions by appropriate connected components 
$$C_\Gamma\!\setminus\!\!\setminus P_1
=P_1^-\sqcup P_1^+,\ldots,C_\Gamma\!\setminus\!\!\setminus P_M=P_M^-\sqcup P_M^+.$$
Note that $P_1,\ldots,P_M\in \{J_{j,d}\}_{j\in\{1,\ldots,k\}, d\in\{1,\ldots,2rn\}}$ 
by Lemma \ref{key3}. 
By noting $P_M\in \{J_{1,d}\}_{d\in\{1,\ldots,2rn\}}$ 
and part (1) of Lemma \ref{key4} for the case $i=1$, 
we have $P_M^+ \supsetneq J'^+$. 

Now assume that $Stab(J)\cap Stab(J')\neq \{1\}$. 
Take $g\in Stab(J)\cap Stab(J')$ with $g\neq 1$.
Note that $g^{-1}\in Stab(J)\cap Stab(J')$. 
Then, we have a hyperplane $H$ of $v_i$-type for some $i\in\{1,\ldots,k\}$ 
and some $v_i\in V(\Gamma_i)$, 
separating $A_\emptyset$ and $gA_\emptyset$. 
We take a longest sequence of separating hyperplanes 
$P_1,\ldots,P_M$ from $A_\emptyset$ to $\gamma A_\emptyset$ 
such that $P_1$ is of $v_i$-type and $P_M$ is of $v_{1,n}$-type. 
Then we have 
$$g\gamma A_\emptyset, g^{-1}\gamma A_\emptyset\in P_M^+$$
by $P_M^+ \supsetneq J'^+$. 
We take a connected component 
$H^-$ of $C_\Gamma\!\setminus\!\!\setminus H$ such that $A_\emptyset\in H^-$. 
Then the other connected component $H^+$ satisfies $gA_\emptyset\in H^+$. 
Because $H\cap J\neq \emptyset$, we have $H\cap J'=\emptyset$ by (ii). 
Thus $H$ can not separate 
$\gamma A_\emptyset$ and $g\gamma A_\emptyset$. 
Hence we have either 
\begin{enumerate}
\item[(a)] 
$A_\emptyset, \gamma A_\emptyset,g \gamma A_\emptyset \in H^-  \text{ and } gA_\emptyset \in H^+$ or 
\item[(b)]
$A_\emptyset\in H^- \text{ and } gA_\emptyset, \gamma A_\emptyset,g \gamma A_\emptyset \in H^+$.
\end{enumerate}

Assume that the case (a) occurs. Then $H$ does not separate 
$A_\emptyset$ and $\gamma A_\emptyset$ and thus 
$H\notin\{J_{i,d}\}_{d\in\{1,\ldots,2rn\}}$ by Lemma \ref{key3}. 
Hence $H$ and $P_1$ are different hyperplanes of the same $v_i$-type, and thus 
they can not intersect by Remark \ref{label}. 
Then we have either $H^-\supsetneq P_1^+$ or $H^+\supsetneq P_1^+$. 
However $H^+\supsetneq P_1^+$ can not occur because 
$\gamma A_\emptyset \notin H^+$ and $\gamma A_\emptyset \in P_1^+$. 
Therefore 
$$H^-\supsetneq P_1^+.$$ 
Then we have 
$$gA_\emptyset \in H^+, H^-\supsetneq P_1^+\supsetneq P_2^+\supsetneq\cdots 
\supsetneq P_{M-1}^+\supsetneq P_M^+\ni g\gamma A_\emptyset.$$ 
Then, $Q_1=H, Q_2=P_1,\ldots,Q_{M+1}=P_M$ is a sequence of separating hyperplanes 
from $gA_\emptyset$ to $g\gamma A_\emptyset$ 
such that $Q_1$ is of $v_i$-type and $Q_{M+1}$ is of $v_{1,n}$-type. 
Thus, $g^{-1}Q_1, g^{-1}Q_2,\ldots,g^{-1}Q_{M+1}$ is a sequence of separating hyperplanes 
from $A_\emptyset$ to $\gamma A_\emptyset$ 
such that $g^{-1}Q_1$ is of $v_i$-type and $g^{-1}Q_{M+1}$ is of $v_{1,n}$-type, 
which contradicts the fact that the sequence $P_1,\ldots,P_M$ is longest. 

Next, assume that the case (b) occurs. Then $H$ does not separate 
$gA_\emptyset$ and $g\gamma A_\emptyset$, that is, 
$g^{-1}H$ does not separate 
$A_\emptyset$ and $\gamma A_\emptyset$ and thus 
$g^{-1}H\notin\{J_{i,d}\}_{d\in\{1,\ldots,2rn\}}$ by Lemma \ref{key3}. 
Hence $g^{-1}H$ and $P_1$ are different hyperplanes of the same $v_i$-type, and thus 
they can not intersect by Remark \ref{label}. 
Then we have either $g^{-1}H^+\supsetneq P_1^+$ or $g^{-1}H^-\supsetneq P_1^+$. 
However $g^{-1}H^-\supsetneq P_1^+$ can not occur because 
$\gamma A_\emptyset \notin g^{-1}H^-$ and $\gamma A_\emptyset \in P_1^+$. 
Therefore 
$$g^{-1}H^+\supsetneq P_1^+.$$ 
Then we have 
$$g^{-1}A_\emptyset \in g^{-1}H^-, g^{-1}H^+\supsetneq P_1^+\supsetneq P_2^+\supsetneq\cdots 
\supsetneq P_{M-1}^+\supsetneq P_M^+\ni g^{-1}\gamma A_\emptyset.$$ 
Then, $Q_1=H, Q_2=gP_1,\ldots,Q_{M+1}=gP_M$ is a sequence of separating hyperplanes 
from $A_\emptyset$ to $\gamma A_\emptyset$ 
such that $Q_1$ is of $v_i$-type and $Q_{M+1}$ is of $v_{1,n}$-type, 
which contradicts the fact that the sequence $P_1,\ldots,P_M$ is longest. 

We now see that $Stab(J)\cap Stab(J')=\{1\}$.

\end{proof}

\begin{remark}\label{mainremark}
In \cite[Theorem 3.3]{Charney}, 
it is shown that $A_\Gamma$ is centerless 
under the setting in Theorem \ref{main} with (1). 
This claim can be proved based on Theorem \ref{main}. 
Indeed, Theorem \ref{main} $(1)\Rightarrow(2)$ implies that $A_\Gamma$ is acylindrically hyperbolic. 
Therefore, the center of $A_\Gamma$ is finite by acylindrical hyperbolicity (\cite[Corollary 7.3]{Osin}). 
Proposition \ref{finite normal subgroup} then implies that the center is trivial. 
Hence, $A_\Gamma$ is centerless. 
\end{remark}

%\begin{acknowledgement}
%\end{acknowledgement}

\section*{Acknowledgements}
The authors would like to thank Sam Shepherd for helpful comments. The authors would like to thank the anonymous reviewers for comments on the previous version of this paper.

%-------------------------------------------------------------------------------------------------------------

\end{document}